\newtheorem{proposition}{Proposition}[section]
\newtheorem{theorem}[proposition]{Theorem}
\newtheorem{lemma}[proposition]{Lemma}
\theoremstyle{definition}
\newtheorem{definition}[proposition]{Definition}
\theoremstyle{remark}
\newtheorem{remark}[proposition]{Remark}
\numberwithin{equation}{section}
\newcommand{\eps}{\varepsilon}
\newcommand{\N}{{\mathbb{N}}}
\newcommand{\R}{{\mathbb{R}}}
\newcommand{\C}{{\mathbb{C}}}
\DeclareMathOperator{\diverg}{div}
\title{On the fine structure of the solutions to nonlinear thin two-membrane problems in 2D}
\author{Lorenzo Ferreri, Luca Spolaor, Bozhidar Velichkov}
\begin{document}
\maketitle

\begin{abstract}
We prove a structure theorem for the solutions of nonlinear thin two-membrane problems in dimension two. Using the theory of quasi-conformal maps, we show that the difference of the sheets is topologically equivalent to a solution of the linear thin obstacle problem, thus inheriting its free boundary structure. More precisely, we show that even in the nonlinear case the branching points can only occur in finite number. We apply our methods to one-phase free boundaries approaching a fixed analytic boundary and to the solutions of a one-sided two-phase Bernoulli problem. 
\end{abstract}

\tableofcontents

\noindent
{\footnotesize \textbf{AMS-Subject Classification}}. 
{\footnotesize 35R35
}\\
{\footnotesize \textbf{Keywords}. Free boundary problems, branch points, two-phase problems, quasi-conformal maps, nonlinear thin two-membrane problem} 

\noindent {\footnotesize {\bf Acknowledgments.}
	L.F. and B.V. are supported by the European Research Council's (ERC) project n.853404 ERC VaReg - \it Variational approach to the regularity of the free boundaries\rm, financed by the program Horizon 2020.
	L.F. is a member of INDAM-GNAMPA.
B.V. has been partially supported by the MIUR-PRIN Grant 2022R537CS ``$NO^3$'' and by the MIUR Excellence Department Project awarded to the Department of Mathematics, University of Pisa, CUP I57G22000700001. L.S. acknowledges the support of the NSF Career Grant DMS 2044954.}

\section{Introduction}\label{section:intro}

We say that a function $u:B_1^+\to\R$ is a variational solution to a variable coefficients thin obstacle problem if 
\begin{equation}\label{e:variational-inequality-thin}
\int_{B_1^+} A \nabla u \cdot \nabla (u - v) \le 0
\end{equation}
for some uniformly elliptic matrix $A$ with $L^\infty$ coefficients and all functions
\[
v \in H^1(B_1^+), \quad v \ge 0 \text{ on } B_1' \quad \text{and} \quad v = u \text{ on } \partial B_1 \cap \{ x_2 \ge 0 \}.
\]
When $A=I$ is the identity matrix, we say that $u$ is a solution of the harmonic thin obstacle problem. The following is the main result of the paper.

\begin{theorem}[Topological properties of solutions to the thin obstacle problem]\label{t:thin-obstacle-general}
Let $u:B_1^+\to\R$ with $u \in H^1(B_1^+)$ and $u = u_0$ on $\partial B_1 \cap \{ x_2 \ge 0 \}$ be a variational solution to a variable coefficients thin obstacle problem for some uniformly elliptic matrix $A$ with $L^\infty$ coefficients. Then, there exists a quasiconformal map $f:\R^2 \to \R^2$ such that the function
\[
h \coloneqq u \circ f
\]
is a solution of the harmonic thin obstacle problem in a neighborhood of the origin. In particular in the ball neighborhood, the non-contact set of $u$ 
\[
\mathcal C:=\{(x,0)\in B_1'\ :\ u(x,0)>0\}
\]
is composed of a finite number of disjoint open segments.
\end{theorem}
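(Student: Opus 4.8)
The plan is to reduce the variable–coefficient problem to the constant–coefficient (harmonic) one by a quasiconformal change of variables, and then to analyze the resulting planar harmonic thin obstacle problem by complex–analytic means; the conclusion about $\mathcal C$ is then transported back through the change of variables, which is a homeomorphism.

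For the reduction, I would first record that, away from the thin set, $u$ solves $\diverg(A\nabla u)=0$ in $B_1^+\cap\{x_2>0\}$, while on $B_1'$ it satisfies the Signorini complementarity conditions $u\ge 0$, $A\nabla u\cdot e_2\le 0$, $u\,(A\nabla u\cdot e_2)=0$. Reflecting the coefficients across $\{x_2=0\}$ (taking $a_{11},a_{22}$ even and $a_{12}$ odd in $x_2$) turns the conormal condition into the natural transmission condition for a uniformly elliptic matrix on all of $B_1$. After normalizing $\det A\equiv 1$, the classical planar theory of the Beltrami equation (the measurable Riemann mapping theorem) yields a quasiconformal homeomorphism $f\from\R^2\to\R^2$ — the principal solution of $\bar\partial f=\mu\,\partial f$ for the Beltrami coefficient $\mu$ attached to $A$ — such that $h:=u\circ f$ is harmonic near the origin in $\{x_2>0\}$. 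Since $\mu$ inherits the reflection symmetry of $A$, one can take $f$ to commute with complex conjugation, so that $f$ preserves $\{x_2=0\}$ and each open half–plane and fixes the origin; transporting the complementarity conditions through $f$ then shows that $h$ is a solution of the harmonic thin obstacle problem near $0$, and that the non–contact set of $u$ is the $f$–image of that of $h$, hence has the same number of connected components.

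For the harmonic case in the plane, I would use that a solution $h$, harmonic in $B_\rho^+$ with $h\ge 0$, $\partial_{x_2}h\le 0$ and $h\,\partial_{x_2}h=0$ on $B_\rho'$, enjoys the optimal $C^{1,1/2}$ regularity up to $B_\rho'$ and that every free boundary point has Almgren frequency at least $3/2>1$; in particular $\nabla h$ is continuous up to $B_\rho'$ and vanishes along the whole free boundary $\Gamma:=\partial_{B_\rho'}\{h>0\}$. Consider then the holomorphic function $G:=\partial_{x_1}h-i\,\partial_{x_2}h$ on $B_\rho^+$: on the non–contact set one has $\immaginary G=0$ (since $\partial_{x_2}h=0$ there), while on the interior of the contact set $h\equiv 0$, hence $\partial_{x_1}h=0$, i.e.\ $\real G=0$. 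By the Schwarz reflection principle, $G$ extends holomorphically across each non–contact arc (reflecting $G$) and $iG$ extends holomorphically across each contact arc (reflecting $G$ with the opposite sign); these extensions are compatible, and since $h$ is nonconstant $G\not\equiv 0$, so $G$ has isolated zeros only (using Riemann's removable singularity theorem at the free boundary points, where $G$ extends continuously by $0$). As $\Gamma\subseteq\{G=0\}\cap B_\rho'$, the free boundary is finite in every compactly contained subset of $B_\rho'$; since the non–contact set is relatively open in $B_\rho'$, it is a finite disjoint union of open segments, and by the previous paragraph the same holds for $\mathcal C$.

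The hardest part will be the reduction step: producing the quasiconformal map so that it simultaneously straightens $A$ to the identity and respects the \emph{entire} thin obstacle structure — the line $\{x_2=0\}$, the half–plane, the sign of the obstacle, the complementarity conditions, and the correct function–space setting — and then verifying that $h=u\circ f$ is a genuine solution of the harmonic Signorini problem rather than merely being harmonic in the interior. The interior reduction of $\diverg(A\nabla u)=0$ to $\Delta h=0$ via quasiconformal maps is classical; controlling the behavior on and near the thin set, including the effect of the reflection of the coefficients and the regularity of $f$ up to the symmetry axis, is where the real work lies.
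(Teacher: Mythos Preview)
Your reduction step is essentially the paper's: extend the Beltrami data by conjugation--reflection so that the principal solution $f$ commutes with $z\mapsto\bar z$, hence preserves the real line, and then check that the variational inequality for $u$ with matrix $A$ becomes the variational inequality for $u\circ f^{-1}$ with the identity. Two small points: first, the change of variables requires the matrix to have determinant one, and simply ``normalizing $\det A\equiv1$'' by dividing out $\sqrt{\det A}$ alters the equation when $\det A$ is nonconstant; the paper instead replaces $A$ by a unimodular symmetric $M$ with $A\nabla u=M\nabla u$ pointwise (a matrix depending on $\nabla u$, cf.\ Astala--Iwaniec--Martin, Ch.~16). Second, the paper verifies the Signorini conditions for $h$ directly at the level of the variational inequality rather than by transporting the pointwise complementarity relations through $f$, which sidesteps the regularity issues you flag in your last paragraph.

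The genuine gap is in your treatment of the harmonic case. The two Schwarz reflections you describe are \emph{not} compatible: across a non--contact arc the extension to the lower half--disk is $z\mapsto\overline{G(\bar z)}$, while across an interior contact arc it is $z\mapsto-\overline{G(\bar z)}$. These two holomorphic functions on $B_\rho^-$ differ by a sign and hence cannot be patched into a single holomorphic extension of $G$ across $B_\rho'$; in particular your appeal to a removable--singularity theorem at the free boundary points has nothing to remove, because there is no candidate continuous extension of $G$ to $B_\rho$ in the first place. The standard remedy---and essentially Lewy's argument, which the paper simply cites---is to work with $G^2$: one has $\immaginary(G^2)=-2\,\partial_{x_1}h\,\partial_{x_2}h$, which vanishes on the non--contact set (where $\partial_{x_2}h=0$), on the interior of the contact set (where $\partial_{x_1}h=0$), and on the free boundary (where, as you correctly note, $\nabla h=0$). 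Hence $\immaginary(G^2)\equiv0$ on all of $B_\rho'$, so $G^2$ extends holomorphically to $B_\rho$ by a single reflection, and its (isolated) zeros on the real axis contain $\Gamma$.
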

%

In \cite{DePhilippisSpolaorVeluchkov2021:QuasiConformal2D} the authors showed that the gradient of a solution to a nonlinear thin obstacle problem is a quasiconformal map. Here, instead, we show that, up to a quasiconformal transformation of the domain, solutions of the nonlinear problem are equivalent to solutions of the linear problem. This quasiconformal equivalence is more flexible than working at the level of the gradient, and it allows us to deal with \emph{non-zero average situations}, cp. Remark \ref{rem:differences}. In particular we can analyse the fine structure of the singular set of three related problems: a nonlinear thin-two membrane problem, a one phase Bernoulli problem with analytic obstacle and a one-sided two-phase Bernoulli problem, none of which can be dealt with working at the level of the gradient.

\medskip

We say that a couple $u,v\in C^{1,\alpha}(B_1^+\cup B_1')$ is a solution to an analytic nonlinear thin two-membrane problem in $B_1^+$ if
\begin{equation}\label{e:thin-two-membrane-system}
\begin{cases}
    \diverg\left( \nabla \mathcal F\left( \nabla u \right) \right) = 0 & \text{in } B_1^+, \\
    \diverg\left( \nabla\mathcal F\left( \nabla v \right) \right) = 0 & \text{in } B_1^+, \\
    u\ge v & \text{on } B_1', \\
    e_2 \cdot \nabla \mathcal F\left( \nabla u \right) =0 & \text{on } \{ u>v \} \cap B_1',\\
     e_2 \cdot \nabla \mathcal F\left( \nabla v \right)  = 0 & \text{on } \{ u>v \} \cap B_1',\\
     e_2 \cdot \Big(\nabla \mathcal F\left( \nabla u \right)-\nabla \mathcal F\left( \nabla v \right)\Big) \le0 & \text{on } \{ u=v \} \cap B_1'.\\
\end{cases}
\end{equation}
for an analytic function $\mathcal F\colon \R^2\to \R$ satisfying
\begin{equation}\label{e:definition-mathcal-F}
\mathcal F(0)=0\,,\qquad \nabla \mathcal F(0)=0\qquad\text{and}\qquad \nabla^2\mathcal F(0)=I\,. 
\end{equation}
Then we have the following result:

\begin{theorem}\label{t:thin-two-membrane-general}
    Let $u,v \in C^{1,\alpha}(B_1^+\cup B_1')$ be a solution to the analytic nonlinear thin two-membrane problem \eqref{e:thin-two-membrane-system} in $B_1^+$, then in the ball $B_{1/2}$, the non-contact set 
\[
\mathcal C:=\{(x,0)\in B_1'\ :\ u(x,0)>v(x,0)\}
\]
is composed of a finite number of disjoint open segments. 
\end{theorem}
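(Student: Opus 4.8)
The plan is to reduce the two-membrane system to a single variable-coefficients thin obstacle problem of the form \eqref{e:variational-inequality-thin}, so that Theorem \ref{t:thin-obstacle-general} applies directly and yields the finiteness of the segments composing $\Ccal$. The natural candidate is the difference $w \coloneqq u - v$. First I would record that $w \ge 0$ on $B_1'$, that the non-contact set of $w$ coincides with $\Ccal$, and that on $\{w>0\}\cap B_1'$ both Neumann-type conditions $e_2\cdot\nabla\mathcal F(\nabla u)=0$ and $e_2\cdot\nabla\mathcal F(\nabla v)=0$ hold, while on $\{w=0\}\cap B_1'$ one has the sign condition $e_2\cdot(\nabla\mathcal F(\nabla u)-\nabla\mathcal F(\nabla v))\le 0$. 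The key algebraic identity is to write the difference of the two Euler--Lagrange operators as a divergence of a linear expression in $\nabla w$: since $\mathcal F$ is analytic with $\nabla^2\mathcal F(0)=I$, we have
\[
\nabla\mathcal F(\nabla u) - \nabla\mathcal F(\nabla v) = \left(\int_0^1 \nabla^2\mathcal F\big(t\nabla u + (1-t)\nabla v\big)\,dt\right)\nabla w \eqqcolon A(x)\,\nabla w,
\]
where $A(x)$ is symmetric, measurable, and — crucially — uniformly elliptic on $B_{1/2}$ provided $\nabla u$ and $\nabla v$ are small there. By the $C^{1,\alpha}$ hypothesis and the normalization \eqref{e:definition-mathcal-F}, after a harmless rescaling we may assume $\|\nabla u\|_\infty + \|\nabla v\|_\infty$ is as small as we like, so $A(x)$ is as close to $I$ as needed and the $L^\infty$ ellipticity constants are controlled.

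With this identity in hand, subtracting the first two PDEs in \eqref{e:thin-two-membrane-system} gives $\diverg(A\nabla w)=0$ in $B_1^+$. The next step is to promote the pointwise/boundary conditions on $w$ to the variational inequality \eqref{e:variational-inequality-thin}: for any competitor $\varphi \in H^1(B_{1/2}^+)$ with $\varphi \ge 0$ on $B_{1/2}'$ and $\varphi = w$ on $\partial B_{1/2}\cap\{x_2\ge 0\}$, I would integrate by parts,
\[
\int_{B_{1/2}^+} A\nabla w\cdot\nabla(w-\varphi) = -\int_{B_{1/2}^+}\diverg(A\nabla w)\,(w-\varphi) + \int_{B_{1/2}'} \big(e_2\cdot A\nabla w\big)(w-\varphi),
\]
where the boundary term on $\partial B_{1/2}\cap\{x_2\ge 0\}$ vanishes because $\varphi=w$ there. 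The bulk term vanishes by the PDE. For the remaining boundary integral over $B_{1/2}'$, note $e_2\cdot A\nabla w = e_2\cdot(\nabla\mathcal F(\nabla u)-\nabla\mathcal F(\nabla v))$: on $\{w>0\}$ this quantity is zero by the two Neumann conditions (so the integrand vanishes since also $w-\varphi=0$ is not needed — the factor $e_2\cdot A\nabla w$ already vanishes), and on $\{w=0\}$ it is $\le 0$ while $w-\varphi = -\varphi \le 0$, so the product is $\ge 0$, hence the whole boundary integral is $\ge 0$ and \eqref{e:variational-inequality-thin} holds (with the sign as written, after checking orientation of the conormal). Thus $w$ is a variational solution to a variable-coefficients thin obstacle problem on $B_{1/2}^+$ with a uniformly elliptic $L^\infty$ matrix $A$.

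Applying Theorem \ref{t:thin-obstacle-general} to $w$ on this ball then gives a quasiconformal $f$ with $w\circ f$ a solution of the harmonic thin obstacle problem near the origin, and in particular the non-contact set $\Ccal = \{w(\cdot,0)>0\}$ is, in a ball neighborhood, a finite union of disjoint open segments; a covering argument over $B_{1/2}'$ (which is compactly contained after rescaling, or can be handled by a chain of such neighborhoods) upgrades this to the statement on $B_{1/2}$. The main obstacle I anticipate is not the reduction itself but making the boundary-term manipulation rigorous at the level of $H^1$ traces: the Neumann-type conditions in \eqref{e:thin-two-membrane-system} are a priori only meaningful in a weak sense, so one must either know that $u,v$ (being $C^{1,\alpha}$ up to $B_1'$) make $e_2\cdot\nabla\mathcal F(\nabla u)$ a genuine continuous function on $B_1'$ — which the hypothesis grants — and that the decomposition of $B_1'$ into $\{w>0\}$ (open) and $\{w=0\}$ is measurable with the sign condition holding a.e.\ on the latter; care is also needed to ensure the rescaling used to shrink $\|\nabla u\|_\infty,\|\nabla v\|_\infty$ preserves the structure of \eqref{e:thin-two-membrane-system} (it does, since the system is invariant under $u(x)\mapsto r^{-1}u(rx)$ with $\mathcal F$ replaced by a rescaled analytic function still satisfying \eqref{e:definition-mathcal-F}).
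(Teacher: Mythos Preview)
Your approach is correct and essentially identical to the paper's: set $w=u-v$, write $\nabla\mathcal F(\nabla u)-\nabla\mathcal F(\nabla v)=A(x)\nabla w$ for a symmetric $C^{0,\alpha}$ matrix (the paper obtains $A$ via a Taylor expansion, you via the equivalent integral $\int_0^1\nabla^2\mathcal F(t\nabla u+(1-t)\nabla v)\,dt$), check that $w$ solves a variable-coefficients thin obstacle problem, and invoke Theorem~\ref{t:thin-obstacle-general}. One small slip: the rescaling $u(x)\mapsto r^{-1}u(rx)$ has gradient $\nabla u(rx)$ and so does \emph{not} shrink $\|\nabla u\|_\infty$; however, uniform ellipticity of $A$ on $B_{1/2}$ follows directly from the continuity of $\nabla^2\mathcal F$, the normalization $\nabla^2\mathcal F(0)=I$, and the $C^{1,\alpha}$ bound on $\nabla u,\nabla v$, so no rescaling is needed.
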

The key observation to pass from \cref{t:thin-obstacle-general} to \cref{t:thin-two-membrane-general} is the fact that the difference $u-v$ is a variational solution to a linear thin obstacle problem with $C^{0, \alpha}$ coefficients, and so it follows in the more general framework of \cref{t:thin-obstacle-general}.

%
%
%
%
%
%
%
%
%
%


\medskip
Next we fix an analytic function $\phi:B_1'\to\R$
and we let 
\[
\mathcal{A} \coloneqq \mathrm{Epi}(\phi) \cap B_1=\{(x',x_d)\in B_1\ :\ x_d>\phi(x')\}\,.
\]
We consider minimizers $u\in H^1(B_1)$ of the one-phase Bernoulli energy
\begin{equation}\label{eqn:onePhaseEnergyU}
J_1(u) \coloneqq \int_{B_1} \vert \nabla u \vert^2\,dx + \vert 
\{ u > 0 \} \cap B_1 \vert \quad 
\end{equation}
under the additional geometric constraint
\begin{equation}\label{eqn:onePhaseConstraint}
\{ u>0 \} \subseteq \mathcal{A} \text{ a.e. in } B_1,
\end{equation}
In the recent paper by Chang-Lara and Savin \cite{ChangLaraSavin:BoundaryRegularityOnePhase} it was shown that, if $\phi \in C^{1, 1/2}$, then in a neighborhood of any contact point $z_0=(z',\phi(z'))\in\partial\Omega_u\cap B_1$ the boundary $\partial\Omega_u$ is a $C^{1,1/2}$ graph over the hyperplane $\{x_d=0\}$. 
More precisely, 
there are a radius $\rho>0$ and a $C^{1,1/2}$ function
	$$f:B_\rho'(z_0)\to\R\ ,\quad f\ge\phi\ \text{ on }\  B_\rho'(z_0),$$
	such that, up to a rotation and translation of the coordinate system, we have 
	\begin{equation}\label{eq:fbgraph}
	\begin{cases}
	\begin{array}{rcl}
	u(x)>0&\quad\text{for}\quad&x\in (x',x_d)\in B_\rho(z_0) \quad\text{such that}\quad x_d>f(x');\\
	u(x)=0&\quad\text{for}\quad&x\in (x',x_d)\in B_\rho(z_0) \quad\text{such that}\quad  x_d\le f(x').
	\end{array}
	\end{cases}
	\end{equation}	
	We denote by $\mathcal C_1(u)$ the contact set of the free boundary $\partial\Omega_u$ with the graph $\{x_d=\phi(x')\}$
	$$
	\mathcal C_1(u):=\{x_d=\phi(x')\}\cap\partial\Omega_u\,,$$
	 and by $\mathcal B_1(u)$ the set of points at which the free boundary separates from $\{x_d=\phi(x')\}$ : 
	 $$\mathcal B_1(u):=\Big\{x\in \mathcal C_1(u)\ :\ B_r(x)\cap  \big(\partial\Omega_u\setminus\{x_d=\phi(x')\}\big)\neq\emptyset\ \text{ for every }\ r>0\Big\}.$$
	By $\mathcal S_{1}(u)$ we denote the set of points in $\mathcal C_1(u)$ at which $u$ has gradient precisely equal to $1$
	\begin{equation}\label{e:def-S-one-phase}
	\mathcal S_{1}(u):=\big\{z\in \mathcal C_1(u)\,:\,|\nabla u|(z)=1\big\}.
	\end{equation}
	We notice that a priori the set $\mathcal C_1(u)$ is no more than a closed subset of $\{x_d=\phi(x')\}$. Moreover, if at a point $x=(x',\phi(x'))$ we have that $|\nabla u|(x',\phi(x'))>1$, then this point is necessarily in the interior of $\mathcal C_1(u)$ in the graph $\{x_d=\phi(x')\}$. Thus, 
	\begin{center}
	$\mathcal S_1(u)$ contains all branch points,  $\mathcal B_1(u)\subset \mathcal S_1(u)$.
\end{center}

	\begin{theorem}[Structure at branch points in the one phase problem with obstacle]\label{thm:onePhaseAnalyticObstacle} Let $u$ be a minimizer of $J_1$ under the constraint \eqref{eqn:onePhaseConstraint} in dimension $d=2$. Then, the following holds:
	\begin{enumerate}[\rm(a)]
	\item\label{item:aa} $\mathcal S_1(u)$ is locally finite and $\mathcal C_1(u)$ is a locally finite union of disjoint closed arcs of graph $\{x_2=\phi(x_1)\}$;\smallskip
	\item\label{item:bb} For every point $z_0\in\mathcal S_1(u)$, one of the following holds:\smallskip
	\begin{enumerate}[\rm(b.1)]
		\item\label{item:bb1} $z_0$ is an isolated point of $\mathcal C_1(u)$ and, in a neighborhood of $z_0$, the free boundary $\partial\Omega_u$ is the graph of an analytic function that vanishes only at $z_0$;\smallskip
		\item\label{item:bb2} $z_0$ lies in the interior of $\mathcal C_1(u)$ and there is $r>0$ such that $u$ is harmonic in $B_r(z_0)$ and $|\nabla u|>1$ at all points of $\{x_2=\phi(x_1)\}\cap B_r(z_0)$ except $z_0$;\smallskip
		\item\label{item:bb3} $z_0$ is an endpoint of a non-trivial interval in the contact set $\mathcal C_1(u)$; moreover, there is an interval $\mathcal I_\rho=(-\rho,\rho)$ and a function $\psi:\mathcal I_\rho\to\R$ continuous at $0$ and such that $\psi(0)>0$ and, up to setting $z_0=0$ and rotating the coordinate axis, 
			\begin{equation}\label{eq:exp}
		f(x)-\phi(x)=
		\begin{cases}
		0 & \text{ if } x\ge  0\\
		x^{2m-\sfrac 12}\,\psi(x) & \text{ if }x<0\,,
		\end{cases}
		\end{equation}
where $f$ is the function from \eqref{eq:fbgraph} and $m\in \N_{\geq 1}$.
	\end{enumerate}
	\end{enumerate}
	\end{theorem}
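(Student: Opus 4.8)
The plan is to reduce Theorem~\ref{thm:onePhaseAnalyticObstacle} to the thin-obstacle picture provided by \cref{t:thin-obstacle-general}, via the boundary regularity of Chang-Lara and Savin. First I would fix a point $z_0\in\mathcal S_1(u)$ and use \eqref{eq:fbgraph} to write the free boundary locally as the graph of $f\ge\phi$ over $\{x_2=\phi(x_1)\}$, with $f\in C^{1,1/2}$. After a rotation and translation placing $z_0$ at the origin with the obstacle graph tangent to $\{x_2=0\}$, the natural candidate is $w\coloneqq f-\phi\ge 0$, which records the detachment of the free boundary from the obstacle; the contact set $\mathcal C_1(u)$ corresponds to $\{w=0\}$ and the branch set to its boundary. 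The key claim I would establish is that $w$ (possibly after a diffeomorphic flattening of the analytic obstacle graph, which is licit since $\phi$ is analytic) is a \emph{variational solution to a variable coefficients thin obstacle problem} in the sense of \eqref{e:variational-inequality-thin}, for some uniformly elliptic matrix $A$ with (at least) $L^\infty$ — indeed $C^{0,\alpha}$ — coefficients. This is exactly the structural hypothesis of \cref{t:thin-obstacle-general}.

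To see why such a reduction should hold: in the region above the free boundary $u$ is harmonic and positive, vanishing on $\partial\Omega_u$ with $|\nabla u|=1$ on the free part (the overdetermined Bernoulli condition) and $|\nabla u|\ge 1$ on the contact part $\mathcal C_1(u)$, consistent with the obstacle being pushed from above. Performing a hodograph/Legendre-type or a boundary-straightening change of variables that flattens $\partial\Omega_u$ onto $\{x_2=0\}$, the transported unknown — essentially the height function $w$ of the free boundary over the obstacle — solves a divergence-form elliptic equation in $B_1^+$ with coefficients built from $\nabla u$ and the analytic data $\phi$; the Bernoulli condition $|\nabla u|=1$ becomes a homogeneous conormal (Neumann) condition on $\{w>0\}$, and the one-sided inequality at contact points ($|\nabla u|\ge 1$) becomes the sign condition defining the thin obstacle problem. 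Since $u\in C^{1,1/2}$ up to the flattened boundary and $\phi$ is analytic, the resulting matrix $A$ has $C^{0,1/2}$ entries, uniformly elliptic by \eqref{e:definition-mathcal-F}-type normalization of $|\nabla u|$. Once this identification is in place, \cref{t:thin-obstacle-general} immediately yields a quasiconformal $g$ with $w\circ g$ a solution of the harmonic thin obstacle problem near $0$, whence its non-contact set is a finite union of disjoint open segments; pulling back by $g$ (a homeomorphism) shows $\{w>0\}$ near $z_0$ is a finite union of arcs and $z_0$ is either isolated in $\mathcal C_1(u)$, interior to it, or an endpoint of a non-trivial contact interval. These three cases are precisely (b.1), (b.2), (b.3), and running the argument at every point of $\mathcal S_1(u)$ together with the a-priori observation that $|\nabla u|>1$ forces interior-of-contact gives the local finiteness in (a).

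It remains to extract the finer expansions. In case (b.2) ($z_0$ interior to $\mathcal C_1(u)$), $u$ is harmonic across $B_r(z_0)$ by removability, and the vanishing of $|\nabla u|-1$ at $z_0$ but not nearby follows from unique continuation / analyticity of $|\nabla u|^2$ along the obstacle arc, which is itself analytic. In case (b.1) ($z_0$ isolated in $\mathcal C_1(u)$), the free boundary near $z_0$ is a non-contact arc with overdetermined Bernoulli data on both sides meeting at $z_0$; by the classical result that one-phase Bernoulli free boundaries in 2D away from branch points are analytic (and by a reflection/Schwarz-type argument across the tangency), $\partial\Omega_u$ is an analytic graph vanishing only at $z_0$. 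For the branch-point expansion \eqref{eq:exp} in case (b.3), the half-space solutions of the harmonic thin obstacle problem in 2D have the known homogeneities $2m-\tfrac12$ (coming from $\real(\zeta^{2m-1/2})$ in the slit plane), and the quasiconformal equivalence transports this asymptotic profile to $w=f-\phi$; the leading order on the non-contact side $x<0$ is therefore $|x|^{2m-1/2}$ times a factor that converges to a positive constant as $x\to 0$, which is exactly the asserted form with $\psi$ continuous at $0$ and $\psi(0)>0$. The main obstacle I anticipate is Step~2: carefully constructing the boundary-flattening change of variables and verifying, with the limited $C^{1,1/2}$ regularity of $u$ at the free boundary and the a-priori merely closed nature of $\mathcal C_1(u)$, that the transformed problem is genuinely of the variational thin-obstacle type \eqref{e:variational-inequality-thin} with admissible coefficients — in particular controlling the behaviour near $z_0$ where $\nabla u$ degenerates and the two geometric regimes (contact vs.\ non-contact) meet, and ensuring the transported inequality retains the correct sign and test-function class.
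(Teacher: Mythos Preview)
Your outline has the right spirit but misses the paper's key idea in the reduction step, and this gap is not merely technical. You propose to take $w=f-\phi$ (or its hodograph analogue) and show it solves a variable-coefficient thin obstacle problem. The difficulty is that a single hodograph transform of $u$ produces a function $\tilde u$ on $B_1^+$ solving a \emph{nonlinear} equation $\diverg(\nabla\mathcal F(\nabla\tilde u))=0$; the obstacle constraint becomes $\tilde u(\cdot,0)\ge\psi$ for some nonzero $\psi$ determined by $\phi$. To reduce to a \emph{linear} thin obstacle problem with zero obstacle one needs to subtract a function $\tilde v$ satisfying the \emph{same} nonlinear equation and with trace $\psi$; only then does the standard linearization $\nabla\mathcal F(\nabla\tilde u)-\nabla\mathcal F(\nabla\tilde v)=M\nabla(\tilde u-\tilde v)$ yield a divergence-form equation for the difference. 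The paper obtains such a $\tilde v$ by first constructing (\cref{t:2} in the appendix) an auxiliary Bernoulli solution $v$ whose free boundary is \emph{exactly} the analytic graph $\{x_2=\phi(x_1)\}$, then hodographing $v$. Without this construction your ``boundary-straightening'' claim has no content: subtracting $\phi$ or any function not solving the nonlinear PDE leaves a right-hand side that destroys the thin-obstacle structure. This is precisely why analyticity of $\phi$ is needed and why the result is sharp (cf.\ \cref{rem:differences}).

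For part (b.3) your proposal to ``transport the asymptotic profile'' through the quasiconformal map $g$ is also insufficient. Quasiconformal maps are only H\"older, so a homogeneity $2m-\sfrac12$ for $w\circ g$ does not yield the same exponent for $w$; one can only conclude some power bound, not the precise expansion \eqref{eq:exp}. The paper instead works directly on the variable-coefficient problem: the quasiconformal equivalence is used only to rule out infinite-order vanishing (\cref{lemma:onePhaseObstDiffLInftyDecayFinite}), after which a truncated Almgren/Weiss monotonicity (following Garofalo--Petrosyan and Jeon--Petrosyan--Smit Vega Garcia for $C^{0,\alpha}$ coefficients) gives a finite frequency $l$, a blow-up classification forces $l=2m-\sfrac12$, and an epiperimetric inequality upgrades this to uniqueness of the blow-up and the rate in \eqref{eq:exp}.
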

The key ideas to reduce the proof of this theorem to that of \cref{t:thin-two-membrane-general} are:
\begin{itemize}
    \item following the construction of \cref{s:proof-t2}, we build a function $v:B_1 \to \R$ which is a minimizer of the Alt-Caffarelli functional \cite{AltCaffarelli:OnePhaseFreeBd} and extends the obstacle, in the sense that
    \[
    \partial \{v>0\} \cap B_1 = \partial A \cap B_1,
    \]
    which is possible thanks to the fact that the obstacle is assumed to be analytic;
    \item using the classical partial hodograph transform on the solution $u$ and the extension $v$ separately, we can reduce our problem to the study of solutions to \ref{e:thin-two-membrane-system} for a specific choice of an analytic function $\mathcal F$ satisfying \eqref{e:definition-mathcal-F};
    \item finally, once we know that branching points are isolated, the precise structure of the free-boundary follows from tracing back the change of coordinates.
\end{itemize}

\medskip

Finally, using a similar procedure to the one for \cref{thm:onePhaseAnalyticObstacle}, we deduce the fine structure of the free boundaries of two-phase problems in which the two phases lie on the same side. Precisely, given two positive constants $\Lambda_u \ge \Lambda_v >0$ we denote with $J_2$ the functional
\[
J_2(u, v) \coloneqq \int_{B_1} \vert \nabla u \vert^2 + \vert \nabla v \vert^2 + \Lambda_u \vert \{u>0\} \cap B_1 \vert + \Lambda_v \vert \{v>0\} \cap B_1 \vert.
\]
%
We say that the couple $u,v\in H^1(B_1)$, $u\ge v$, is a minimizer of $J_2$ in $B_1$ if 
$$J_2(u,v)\le J_2(\varphi,\psi)$$
for every couple $\varphi,\psi\in H^1(B_1)$ such that $\varphi\ge \psi$ in $B_1$, $u=\varphi$ on $\partial B_1$ and $v=\psi$ on $\partial B_1$. 

 Given a couple $u,v$ that minimizes $J_2$ in $B_1$, we set $\Omega_u:=\{u>0\}$ and $\Omega_v:=\{v>0\}$ and we notice that $\Omega_v\subset\Omega_u$.  We will call the point on $\partial\Omega_u\cap\partial\Omega_v$ two-phase points and we will say that a two-phase point $x_0\in \partial\Omega_u\cap\partial\Omega_v$ is a branch point if the two free boundaries $\partial\Omega_u$ and $\partial\Omega_v$ are separating, that is, $$|B_r(x_0)\cap(\partial\Omega_u\setminus\partial\Omega_v)|>0\quad\text{for every}\quad r>0;$$ 
we will denote by $\mathcal B_2(u,v)$ the set of branch points. By \cite{FerreriVelichkov2023:oneSidedTwoPhase} we know that in dimension two, the two free boundaries $\partial\Omega_u$ and $\partial\Omega_v$ are graphs of $C^{1,1/2}$ functions, so the branch points are cusp-like singularities on the boundary of $\Omega_u\setminus\Omega_v$. In the next theorem, we prove that the set of branch points is locally finite and we show that the functions $u$ and $v$ are analytic in the neighborhood of a branch point. 

\begin{theorem}[Structure at branch points in a one-sided two-phase problem]\label{t:one-sided-two-phase} In dimension $d=2$, let $u,v\in H^1(B_1)$, $u\ge v$, be minimizers of the functional $J_2$ in $B_1$. Then, the following holds:
	\begin{enumerate}[\rm(a)]
	\item\label{item:aa} the set of branch points $\mathcal B_2(u,v)$ is locally finite;\smallskip
	\item\label{item:bb} for every point $z_0\in\mathcal B_2(u,v)$, one of the following holds:\smallskip
	\begin{enumerate}[\rm(b.1)]
		\item\label{item:bb1} there is a neighborhood $B_r(z_0)$ of $z_0$ such that $\{z_0\}=\partial\Omega_u\cap\partial\Omega_v$ in $B_r(z_0)$;\smallskip
		\item\label{item:bb2}there is a ball $B_r(z_0)$ such that $\Omega_u=\Omega_v$ in $B_r(z_0)$;\smallskip
		
		\item\label{item:bb3} there is an interval $(-\rho,\rho)$ and two  $C^{1,1/2}$ functions $\eta_u,\eta_v:(-\rho,\rho)\to\R$ such that $\eta_u(0)=\eta_v(0)=\eta'_u(0)=\eta'_v(0)$ and such that, up to setting $z_0=0$ and rotating the coordinate axes, the following holds in the square $(-\rho,\rho)\times(-\rho,\rho)$: 
  $$\Omega_u=\{(x,y):y>\eta_u(x)\}\qquad\text{and}\qquad \Omega_v=\{(x,y):y>\eta_v(x)\},$$
  $$\eta_u\equiv\eta_v\quad\text{on}\quad (-\rho,0)\ ;\qquad \eta_u<\eta_v\quad\text{on}\quad (0,\rho)\,;$$
   moreover, there are $m\in\N$, $m\ge 3$, and a function $\psi:\mathcal (-\rho,\rho)\to\R$ continuous at $0$ and such that $\psi(0)>0$ and
			\begin{equation}\label{eq:exp}
		\eta_v(x)-\eta_u(x)=
		\begin{cases}
		0 & \text{ if } x\le  0\\
		x^{2m-\sfrac 12}\,\psi(x) & \text{ if }x>0\,.
		\end{cases}
		\end{equation}
	\end{enumerate}
	\end{enumerate}
	\end{theorem}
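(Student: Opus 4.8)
The plan is to mimic the proof of \cref{thm:onePhaseAnalyticObstacle}: localize near a branch point $z_0$, straighten the two free boundaries, apply the classical partial hodograph transform to $u$ and to $v$ separately so as to land on the analytic nonlinear thin two-membrane system \eqref{e:thin-two-membrane-system}, invoke \cref{t:thin-two-membrane-general}, and finally read off the local structure from that of the \emph{linear} thin obstacle problem via \cref{t:thin-obstacle-general}. First I would record the optimality conditions (contained in \cite{FerreriVelichkov2023:oneSidedTwoPhase}): $u$ is harmonic in $\Omega_u$ and $v$ in $\Omega_v$, one has $|\nabla u|=\sqrt{\Lambda_u}$ on the one-phase part $\partial\Omega_u\setminus\partial\Omega_v$ (near which $v\equiv 0$) and $|\nabla v|=\sqrt{\Lambda_v}$ on $\partial\Omega_v\setminus\partial\Omega_u$ (near which $u>0$), and on $\partial\Omega_u\cap\partial\Omega_v$ only a one-sided two-phase Bernoulli relation holds; moreover, by the same reference, both free boundaries are $C^{1,1/2}$ graphs near $z_0$. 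Since $\eta_u\le\eta_v$ the two graphs cannot cross transversally, so after a translation and a rotation we may assume $z_0=0$ with $\Omega_u=\{y>\eta_u(x)\}$, $\Omega_v=\{y>\eta_v(x)\}$ near $0$, $\eta_u,\eta_v\in C^{1,1/2}$, $\eta_u\le\eta_v$, and $\eta_u(0)=\eta_v(0)=\eta_u'(0)=\eta_v'(0)=0$; and since $z_0$ is a branch point it is approached by one-phase free-boundary points of both $\partial\Omega_u$ and $\partial\Omega_v$, so $|\nabla u|(z_0)=\sqrt{\Lambda_u}$ and $|\nabla v|(z_0)=\sqrt{\Lambda_v}$.

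Next comes the core construction. Replacing $u$ by $u/\sqrt{\Lambda_u}$ and $v$ by $v/\sqrt{\Lambda_v}$ leaves the domains and free boundaries unchanged but normalizes both Bernoulli slopes to $1$; then I apply the classical partial hodograph (Legendre) transform to $u$ on $\Omega_u$ near $0$ and, independently, to $v$ on $\Omega_v$ near $0$, exactly as in \cref{s:proof-t2}. Each transform keeps $x$ as the tangential variable, straightens the relevant free boundary to a segment, and --- after subtracting the explicit reference profile dictated by the unit slope --- produces a $C^{1,\alpha}$ function, call it $h$ for $v$ and $g$ for $u$, solving $\diverg(\nabla\mathcal F(\nabla h))=0$ resp. $\diverg(\nabla\mathcal F(\nabla g))=0$ for a \emph{specific analytic} $\mathcal F$ satisfying \eqref{e:definition-mathcal-F}; since this computation is universal once the slope is normalized, the \emph{same} $\mathcal F$ serves for $u$ and for $v$. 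On the thin set the traces of $h$ and $g$ are $\eta_v$ and $\eta_u\le\eta_v$, so $h\ge g$ on $B_\rho'$; the one-phase Bernoulli conditions become the homogeneous Neumann conditions $e_2\cdot\nabla\mathcal F(\nabla h)=0$ and $e_2\cdot\nabla\mathcal F(\nabla g)=0$ on $\{h>g\}$; and the two-phase relation --- with $\Lambda_u\ge\Lambda_v$ fixing its sign --- becomes $e_2\cdot(\nabla\mathcal F(\nabla h)-\nabla\mathcal F(\nabla g))\le 0$ on $\{h=g\}$. Thus $(h,g)$ solves \eqref{e:thin-two-membrane-system} in a half-ball $B_\rho^+$, the non-contact set $\mathcal C=\{h>g\}\cap B_\rho'$ is the image of the two-phase region $\{\eta_u<\eta_v\}$, and $0$ sits at an endpoint of a connected component of $\mathcal C$. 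Observe also that $\nabla h(0)=\nabla g(0)=0$, since at $z_0$ the gradients of $u$ and $v$ equal the normalized slope, precisely the value at which the reference profile is subtracted.

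Now \cref{t:thin-two-membrane-general} gives that $\mathcal C$ is a finite union of disjoint open segments in $B_{1/2}'$; undoing the transform, the branch points near $0$ --- which correspond to endpoints of components of $\mathcal C$ --- are isolated, proving part (a). For part (b), I distinguish the only three local configurations of the one-dimensional contact set $\{h=g\}$ at $0$: it may reduce to $\{0\}$ (a component of $\mathcal C$ on each side), giving case (b.1), in which pulling back shows that $z_0$ is isolated in $\partial\Omega_u\cap\partial\Omega_v$ and the free boundary is the graph of an analytic function vanishing only there; it may contain a nondegenerate interval with endpoint $0$, giving case (b.3); or it may fill a whole neighborhood of $0$, the degenerate alternative (b.2). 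In case (b.3) the sharp vanishing rate is inherited from the linear thin obstacle problem at an endpoint of its contact set: by \cref{t:thin-obstacle-general}, $h-g$ --- a variable-coefficient thin obstacle solution whose $C^{0,\alpha}$ coefficient matrix $\int_0^1\nabla^2\mathcal F\big(t\nabla h+(1-t)\nabla g\big)\,dt$ equals $I$ at $0$ because $\nabla h(0)=\nabla g(0)=0$ --- agrees with a harmonic thin obstacle solution precomposed with a quasiconformal map that is differentiable at $0$ with conformal differential, hence preserves homogeneities; since the admissible homogeneities of the harmonic model at such endpoints are $2m-\tfrac12$, $m\in\N_{\ge 1}$, transporting the leading profile back through the hodograph and the rotation yields $\eta_v(x)-\eta_u(x)=x^{2m-1/2}\psi(x)$ for $x>0$ (and $\equiv 0$ for $x\le0$), with $\psi$ continuous at $0$ and $\psi(0)>0$. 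Finally, expanding $u$ and $v$ at $z_0$ and using that the simultaneous validity of the two one-phase Bernoulli conditions, together with $u\ge v$ and $\Lambda_u\ge\Lambda_v$, forces the lowest-order coefficients of the two expansions to coincide, one rules out $m=1,2$, so $m\ge3$.

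The main difficulty lies in this core construction, namely in checking that the two a priori unrelated partial hodograph transforms of $u$ and $v$ can genuinely be arranged --- via the rescalings by $\sqrt{\Lambda_u},\sqrt{\Lambda_v}$, a common flattening, subtraction of the correct reference profile, and the two-phase relation on $\{h=g\}$ --- to produce the \emph{same} analytic operator $\mathcal F$ normalized by \eqref{e:definition-mathcal-F}, so that $(h,g)$ honestly solves \eqref{e:thin-two-membrane-system}; and, relatedly, in pinning down the exact dictionary between the original free-boundary picture and the contact/non-contact sets of the transformed system, including the final point that the normalizing map is conformal enough near $z_0$ not to alter the exponent $2m-\tfrac12$, and the separate delicate bookkeeping that upgrades $m\ge1$ to $m\ge3$.
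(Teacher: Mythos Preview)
Your overall architecture coincides with the paper's: normalize by $\sqrt{\Lambda_u},\sqrt{\Lambda_v}$, apply the partial hodograph to $u$ and $v$ separately, land on the analytic thin two-membrane system \eqref{e:thin-two-membrane-system} with the same Lagrangian $\mathcal F(x,y)=\frac{x^2+y^2}{2(1+y)}$, and deduce part~(a) from \cref{t:thin-two-membrane-general}. For part~(b) you correctly observe that $w=h-g$ solves a variable-coefficient thin obstacle problem with $C^{0,1/2}$ coefficients equal to $I$ at the origin.

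The gap is in how you extract the precise rate in case~(b.3). You claim that the quasiconformal map $f$ produced by \cref{t:thin-obstacle-general} is differentiable at $0$ with a conformal differential, and hence ``preserves homogeneities'', so that the $(2m-\tfrac12)$-asymptotic of the harmonic model transports directly back to $w$. This step is not justified: $f$ is the normalized solution of a Beltrami equation with coefficients that are merely $L^\infty$ (at best $C^{0,1/2}$), and neither differentiability of $f$ at $0$ nor, a fortiori, the preservation of an expansion of the form $r^{2m-1/2}\psi$ with $\psi$ continuous at $0$, follows from the tools assembled in the paper. Even granting a Teichm\"uller--Wittich--Belinski\u{\i}-type conformality at $0$, you would still need a quantitative remainder estimate on $f$ near $0$ to recover a continuous $\psi$ with $\psi(0)>0$, and no such estimate is available here.

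The paper takes a different route for exactly this reason. It uses the quasiconformal map \emph{only} to rule out infinite-order vanishing of $w$ (the analogue of \cref{lemma:onePhaseObstDiffLInftyDecayFinite}), and then switches to the variational machinery: almost-monotonicity of Weiss-type energies and of a truncated Almgren frequency for the $C^{0,1/2}$-coefficient thin obstacle problem (after \cite{JeonPetrosyanSmitvega2020:AlmostMinimizersThinObstacleHolder}), yielding a well-defined frequency $l=2m-\tfrac12$, followed by the epiperimetric inequality of \cite{ColomboSpolaorVelichkov2020:DirectEpiperimetricThinObstacle} to obtain uniqueness of the blow-up and the sharp decay that translates into the expansion \eqref{eq:exp}. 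In short: your shortcut through the quasiconformal map tries to replace the frequency/epiperimetric analysis of \cref{sec:proofThmonePhaseAnalyticObstacle}, and that replacement is where the argument is incomplete. (A minor point: the hodograph construction you want is the one in \S\ref{subsec:onePhaseClassicalHodograph}, not \cref{s:proof-t2}.)
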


\begin{remark}\label{rem:differences}
In the case when the obstacle $\phi$ is $0$, \cref{thm:onePhaseAnalyticObstacle} was proved in  \cite{DePhilippisSpolaorVeluchkov2021:QuasiConformal2D} in two different ways, via a conformal hodograph transform and via a "linear" hodograph transform, but both approaches fail in the presence of a non-trivial $\phi$. \cref{thm:onePhaseAnalyticObstacle} completes the analysis started in \cite{DePhilippisSpolaorVeluchkov2021:QuasiConformal2D} and is also sharp; indeed, by taking a solution of the one-phase problem (without obstacle) one can easily add an artificial $C^\infty$ smooth (but not analytic) obstacle touching the free boundary infinitely many times. Similarly, the conformal hodograph transform method from \cite{DePhilippisSpolaorVeluchkov2021:QuasiConformal2D} does not apply to \cref{t:one-sided-two-phase}. In particular, this leaves an open question regarding the functions $\psi$, responsible for the opening of the cusps in  \cref{thm:onePhaseAnalyticObstacle} and \cref{t:one-sided-two-phase}; we suspect that in both cases these functions are analytic, as it happens in the case of zero obstacle studied in  \cite{DePhilippisSpolaorVeluchkov2021:QuasiConformal2D}, but we couldn't find a simple proof of this fact in the general case. 
\end{remark}

\section{Proof of \cref{t:thin-obstacle-general}}

%
%
%
%
We look for a quasiconformal mapping $f: \C \to \C$ which reduces \eqref{e:variational-inequality-thin} to a harmonic thin obstacle problem. To this aim, let $M:B_1^+ \to \mathcal{M}^{2 \times 2}$ be a symmetric and uniformly elliptic matrix field with $\det M = 1$ a.e. in $B_1^+$ and such that
\[
A \nabla u = M \nabla u \quad \text{a.e. in } B_1^+.
\]
For the existence of such $M$, see e.g. \cite[Chapter 16]{AstalaIwaniecMartin:PDEsQuasiconformal}. Notice that, in general, $M \in L^{\infty}\left( B_1^+ \right)$, but not even continuous. 
Now we require that
\begin{equation}\label{eqn:ThinObstacleQconformalEqn}
\left( Df \right)^T Df = M^{-1} J(z, f) \quad \text{a.e. in } B_1^+,
\end{equation}
where $J(z,f)$ is the Jacobian of the map $f$. Since $\det M = 1$, this is equivalent to the following linear Beltrami equation (e.g. \cite[Chapter 10]{AstalaIwaniecMartin:PDEsQuasiconformal})
%
%
%
\[
\partial_{\overline{z}} f = \mu \partial_z f + \nu \overline{\partial_z f} \quad \text{a.e. in } B_1^+,
\]
for some measurable complex functions $\mu, \nu:B_1^+ \to \C$ satisfying the uniform ellipticity condition
\[
\vert \mu \vert + \vert \nu \vert < 1 \quad \text{a.e. in } B_1^+.
\]
In order to obtain a global Beltrami equation, we extend $\mu$ and $\nu$ to $\tilde{\mu}$ and $\tilde{\nu}$ on the whole complex plane, and we choose the following way:
\[
\tilde{\mu}(z) =
\begin{cases}
    \mu(z) & \text{if } z \in B_1^+ , \\
    \overline{\mu}(\overline{z}) & \text{if } z \in B_1^- ,\\
    0 & \text{otherwise},
\end{cases}
\quad \text{and} \quad
\tilde{\nu}(z) =
\begin{cases}
    \nu(z) & \text{if } z \in B_1^+ , \\
    \overline{\nu}(\overline{z}) & \text{if } z \in B_1^- ,\\
    0 & \text{otherwise}.
\end{cases}
\]
Let $f$ be the unique quasiconformal solution to the Beltrami equation
\[
\partial_{\overline{z}} f = \tilde{\mu} \partial_z f + \tilde{\nu} \overline{\partial_z f} \quad \text{in } \C
\]
normalized so that $f(0) = 0$ and $f(1) = 1$. For the  existence and uniqueness, see e.g. \cite[Chapter 6]{AstalaIwaniecMartin:PDEsQuasiconformal}. Notice that thanks to the uniqueness and the extension chosen for $\tilde{\mu}$ and $\tilde{\nu}$, we also have
\[
f(z) = \overline{f}\left( \overline{z} \right) \text{ for all } z \in \C,
\]
so that $f$ maps the real line onto itself. Now consider the function
\[
w \coloneqq u \circ f^{-1}.
\]
Thanks to \eqref{eqn:ThinObstacleQconformalEqn}, there exists a neighborhood of the origin, which without loss of generality we can assume to be $B_1(0)$, such that $w$ solves the variational thin obstacle problem
\begin{align*}
\int_{B_1^+} \nabla w \cdot \nabla (w - v) &\le 0\quad\text{for every}\quad v \in H^1(B_1^+)\\
&\text{such that}\quad v \ge 0 \text{ on } B_1' \quad \text{and} \quad v = w \text{ on } \partial B_1 \cap \{ x_2 \ge 0 \}.
\end{align*}
Finally, by the result of Lewy \cite{LewyThinObstacle2D}, we get that the contact set is composed of a finite number of disjoint intervals.

\section{Proof of \cref{t:thin-two-membrane-general}}
%
%
We will write an equation for the difference
\[
w \coloneqq u - v.
\]
For any $y=(y_1,y_2)$ and $x=(x_1,x_2)$ and any analytic function $F$ of two variables, we write the k-th Taylor polynomial of $F$ at $y$ as
\[
T_k[x](F(y))=\sum_{i_1=1}^2\sum_{i_2=1}^2\dots\sum_{i_k=1}^2\partial_{i_1i_2\dots i_k}F(y)x_{i_1}x_{i_2}\dots x_{i_k}.
\]
Then, 
\begin{align*}
\partial_1F(x)-\partial_1F(y)&=\sum_{k=1}^{+\infty}\frac{1}{k!}T_k[x-y](\partial_1F(y)) = \\
&=(x_1-y_1) \sum_{k=1}^{+\infty}\frac{1}{k!} T_{k-1}[x-y](\partial_{11}F(y)) + (x_2-y_2) \sum_{k=1}^{+\infty}\frac{1}{k!}T_{k-1}[x-y](\partial_{12}F(y)),\\
\partial_2F(x)-\partial_2F(y)&=\sum_{k=1}^{+\infty}\frac{1}{k!}T_k[x-y](\partial_2F(y)) = \\
&=(x_1-y_1) \sum_{k=1}^{+\infty}\frac{1}{k!} T_{k-1}[x-y](\partial_{21}F(y)) + (x_2-y_2) \sum_{k=1}^{+\infty}\frac{1}{k!}T_{k-1}[x-y](\partial_{22}F(y)).
\end{align*}
In particular, if we denote by $M(x,y)$ the $2\times 2$ symmetric matrix with coefficients 
\begin{equation}\label{eqn:matrixMAnalyticFCoeffs}
    m_{ij}:= \sum_{k=1}^{+\infty}\frac{1}{k!} T_{k-1}[x-y](\partial_{ij}F(y)),
\end{equation}
we have that 
\[
\nabla F(x)-\nabla F(y)=M(x,y)\begin{pmatrix}x_1-y_1\\ x_2-y_2\end{pmatrix}.
\]
This implies that the difference $w$ is the (unique) $C^{1, \alpha}$ viscosity solution of the thin obstacle problem
\begin{equation*}
\begin{cases}
    \diverg\left( M\left( \nabla u, \nabla v \right) \nabla w \right) = 0 & \text{in } B_1^+, \\
    w\ge 0 & \text{on } B_1', \\
    e_2 \cdot M\left( \nabla u, \nabla v \right) \nabla w =0 & \text{on } \{ w>0 \} \cap B_1',\\
    e_2 \cdot  M\left( \nabla u, \nabla v \right) \nabla w \le0 & \text{on } \{ u=v \} \cap B_1',\\
\end{cases}
\end{equation*}
for the symmetric and uniformly elliptic matrix $M$ with $C^{0, \alpha}$ coefficients given by \eqref{eqn:matrixMAnalyticFCoeffs}.

\begin{remark}\label{rmk:Thin2membraneDiffProperty}
We notice that, by the convexity of the problem and symmetry of $M$, it is also the unique solution of 
\begin{equation}\label{eqn:variationalThinObstDefProof}
\min \left\{ \int_{B_1^+} M \nabla w \nabla w : w \in H^1(B_1^+), \, w \ge 0 \text{ on } B_1', \, w = w_0 \text{ on } \partial B_1 \cap \{ x_2 \ge 0 \} \right\} .
\end{equation}
The remaining part of \cref{t:thin-two-membrane-general} follows directly from \cref{t:thin-obstacle-general}.
\end{remark}

\section{Proof of \cref{thm:onePhaseAnalyticObstacle}}\label{sec:proofThmonePhaseAnalyticObstacle}

We divide the proof in two steps: first we prove that the branching points are isolated and then we provide the precise structure of the free-boundary at the branching points.

\subsubsection{Hodographic reduction to a thin two-membrane  problem}\label{subsec:onePhaseClassicalHodograph}
We let $v$ be the smooth solution to the one-phase Bernoulli problem constructed in  \ref{t:2},  with $f=\phi$ the analytic obstacle. Even though the existence of such a function $v$ could probably be deduced from Cauchy-Kovalevskaya theorem, we decided to give a complete proof in the appendix using the techniques in \cite{DePhilippisSpolaorVeluchkov2021:QuasiConformal2D} as we couldn't find a precise reference in the literature. 

next, we introduce the functions $u'$ and $v'$ via the classical hodograph transform, separately for the solution $u$ and the extension $v$. More precisely, they are locally defined via the following relations (which up to a scaling we can assume valid in $B_1^+$).
\begin{align*}
    & \Phi_u(x, y) \coloneqq \left(x, u(x, y)\right) \longleftrightarrow \left(s, u'(s, z)\right) \coloneqq \Phi_u(x, y)^{-1} & (s, z) \in B_1^+, \\
    & \Phi_v(x, y) \coloneqq \left(x, v(x, y)\right) \longleftrightarrow \left(s, v'(s, z)\right) \coloneqq \Phi_v(x, y)^{-1} &  (s, z) \in B_1^+.
\end{align*}
%
%
%
In the following, we will be interested in the linearizations of $u'$ and $v'$. More precisely, we define
\[
\tilde{u} \coloneqq u' - z \quad \text{and} \quad \tilde{v} \coloneqq v' - z
\]
Let us proceed to write down the problem solved by $\tilde{u}$ and $\tilde{v}$. Actually we do the computations only for $\tilde{u}$, since there is no difference in the derivation for $\tilde{v}$.

Let us first write the energy in \eqref{eqn:onePhaseEnergyU} in terms of $u'$. To this aim, differentiating the relation
\[
u\left( s, u'(s, z) \right) = z
\]
we have
\begin{equation}\label{eqn:HodographDerivatives}
    \partial_x u + \partial_y u \partial_s u' = 0 \quad \text{and} \quad \partial_y u \partial_z u' = 1.
\end{equation}
Moreover, from \eqref{eqn:HodographDerivatives}
\begin{equation}\label{eqn:HodographJacobian}
\det D\Phi_u^{-1} = \frac{1}{\det D\Phi_u} = \frac{1}{\partial_y u }  = \partial_z u'.
\end{equation}
In particular thanks to \eqref{eqn:HodographDerivatives} and \eqref{eqn:HodographJacobian}, locally we have the following transformation of the energy functional
\[
J \longrightarrow \int_{B_1^+} \frac{1+ \left(\partial_s u' \right)^2 + \left(\partial_z u' \right)^2}{\partial_z u'}.
\]
Hence, we have that $\tilde{u}$ is a local minimizer, with respect to smooth variations $\varphi \in C^{1}_c(B_1)$ that guarantee $\tilde{u} + \varphi \ge \tilde{v}$ on $\{ z=0 \}$, of the functional
\[
\tilde{J} \coloneqq \frac{1}{2} \int_{B_1^+} \frac{\vert \nabla \tilde{u} \vert^2}{1 + \partial_z \tilde{u}} = \int_{B_1^+} F\left( \nabla \tilde{u} \right),
\]
where we have introduced the lagrangian
\begin{equation}\label{eqn:hodographLagrangian}
F(x, y) \coloneqq \frac{x^2 + y^2}{2(1+y)}.
\end{equation}
Proceeding in the same way for $\tilde{v}$, a first variation argument shows that the couple $\tilde{u},\tilde{v}$ solves the system \eqref{e:thin-two-membrane-system}. Since $\mathcal{F}$ is analytic and satisfies \eqref{e:definition-mathcal-F}, we conclude from \cref{t:thin-two-membrane-general} that its branching points are isolated. This proves \cref{thm:onePhaseAnalyticObstacle} (a).

\subsubsection{Structure of the free-boundary} In this section we prove \cref{thm:onePhaseAnalyticObstacle} (b). Let the function $w:B_1^+ \to \R$ be defined as
\[
w \coloneqq \tilde{u} - \tilde{v}.
\]
To begin with, we prove that in dimension $d=2$ the function $w$ cannot decay with infinite order.
\begin{lemma}\label{lemma:onePhaseObstDiffLInftyDecayFinite}
There exist a real number $\gamma > 0$ and a sequence of radii $r_n \to 0^+$ such that
\begin{equation}\label{eqn:onePhaseObstLInftyFiniteOrder}
\lim_{n \to +\infty} \left\| \frac{w\left( r_n x \right)}{r_n^{\gamma}} \right\|_{L^{\infty}\left( B_r^+ \right)} \in (0, +\infty].
\end{equation}
\end{lemma}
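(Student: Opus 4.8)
The plan is to argue by contradiction: suppose that for every $\gamma>0$ one has $r^{-\gamma}\|w(r\cdot)\|_{L^\infty(B_r^+)}\to 0$ as $r\to 0^+$, i.e.\ $w$ vanishes to infinite order at the origin. We want to deduce that $w\equiv 0$ in a neighborhood of $0$, which is impossible since the origin was assumed to be a branch point (so $\tilde u\ne\tilde v$ on sequences of points approaching $0$ from inside $\{z=0\}$ where the contact set separates, and by the interior harmonicity/analyticity away from the contact set $w$ cannot be identically zero near $0$). The essential point is that $w$ is a variational solution of a thin obstacle problem with $C^{0,\alpha}$ coefficients — this was established in the proof of \cref{t:thin-two-membrane-general}, since $w=\tilde u-\tilde v$ and $\tilde u,\tilde v$ solve \eqref{e:thin-two-membrane-system} — and such solutions satisfy an Almgren-type monotonicity formula and, crucially, a strong unique continuation property.

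The key steps, in order, are the following. First, recall from \cref{rmk:Thin2membraneDiffProperty} that $w$ minimizes $\int_{B_1^+}M\nabla w\cdot\nabla w$ among competitors that are nonnegative on $B_1'$ and agree with $w$ on $\partial B_1\cap\{x_2\ge0\}$, where $M=M(\nabla u,\nabla v)$ is symmetric, uniformly elliptic, with $C^{0,\alpha}$ coefficients. Second, since $w\ge 0$ on $B_1'$ and the Euler--Lagrange conditions force $e_2\cdot M\nabla w=0$ on $\{w>0\}\cap B_1'$ and $e_2\cdot M\nabla w\le 0$ on the contact set, one can reflect: the complementarity structure in two dimensions combined with the thin obstacle conditions lets one show that on the thin set the Signorini conditions hold, and one obtains an Almgren frequency function $N(r)=\frac{r\int_{B_r^+}M\nabla w\cdot\nabla w}{\int_{\partial B_r^+}\mu w^2}$ (with $\mu$ the conormal density associated to $M$) which is bounded and has a finite limit $N(0^+)=:\gamma_0$; this is standard for variable-coefficient Signorini problems (Garofalo--Petrosyan--Smit Vega Garcia type arguments), and the regularity $C^{0,\alpha}$ of $M$ is exactly what is needed to make the almost-monotonicity work. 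Third, once $N(r)$ is bounded, the standard doubling estimate gives $\|w\|_{L^\infty(B_r^+)}\gtrsim r^{C}$ for some finite $C$ unless $w\equiv 0$ near the origin — which contradicts the infinite-order vanishing hypothesis. Then $\gamma:=N(0^+)$ (or any exponent slightly above it extracted along a doubling sequence $r_n$) furnishes the claimed $\gamma>0$ and sequence $r_n\to 0^+$ with the blow-up quotient bounded away from $0$.

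The main obstacle I expect is \emph{setting up the frequency function rigorously for the variable-coefficient thin obstacle problem with only $C^{0,\alpha}$ coefficients}: one must justify the almost-monotonicity of $N(r)$, handle the error terms coming from the non-constancy of $M$ (which produce a factor like $e^{Cr^\alpha}$), and deal with the fact that the Signorini condition couples the interior equation with the obstacle condition on $B_1'$. A clean alternative is to bypass the frequency function entirely and invoke directly a known result: $w$ being a nonnegative-constrained minimizer of a uniformly elliptic quadratic form with Hölder coefficients, $w$ is $C^{1,\alpha}$ up to $B_1'$ (this is in the literature for the variable-coefficient Signorini problem), and then one applies the strong unique continuation property for solutions of $\operatorname{div}(M\nabla w)=0$ across the flat portion where $w>0$, together with a reflection across the contact set, to conclude that infinite-order vanishing forces $w\equiv0$; extracting the finite order $\gamma$ then just amounts to taking $\gamma$ to be the vanishing order of $w$ at $0$, which is finite, and choosing $r_n$ realizing the $\limsup$ of the normalized sup-norm. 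I would present the argument in this second, more self-contained form, citing the relevant unique-continuation and boundary-regularity results, and flag the frequency-function route as the conceptual backbone.
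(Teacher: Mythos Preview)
Your approach is genuinely different from the paper's and, as written, has a real gap. The paper does \emph{not} set up a frequency function for $w$ at this stage; instead it invokes \cref{t:thin-obstacle-general}: since $w$ is a variational solution of a thin obstacle problem with $L^\infty$ (in fact $C^{0,\alpha}$) coefficients, there is a quasiconformal map $f$ such that $h=w\circ f$ solves the \emph{harmonic} thin obstacle problem. For the harmonic problem, finite order of vanishing is classical, so there is $\beta>0$ with $\|h(r_n\cdot)\|_{L^\infty}/r_n^\beta$ bounded below along any sequence. Then the H\"older continuity of $f^{-1}$ (a general property of quasiconformal maps) gives $|f^{-1}(z)|\ge c|z|^{\delta}$, and composing the two estimates yields \eqref{eqn:onePhaseObstLInftyFiniteOrder} for $w$. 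No frequency function or unique continuation for the variable-coefficient problem is used in the lemma itself.

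The gap in your route is precisely the one the paper discusses in the paragraph immediately following the lemma: with coefficients that are only $C^{0,\alpha}$ (here $\alpha=\sfrac12$), the full Almgren frequency is not known to be almost-monotone for the Signorini problem. The available theory (Jeon--Petrosyan--Smit Vega Garcia, which the paper cites) proceeds through a family of Weiss energies and yields only a \emph{truncated} frequency $\widetilde N_{k_0}$, which cannot by itself rule out infinite-order vanishing. The Garofalo--Petrosyan--Smit Vega Garcia type results you invoke for an untruncated $N(r)$ require Lipschitz coefficients, so your sentence ``the regularity $C^{0,\alpha}$ of $M$ is exactly what is needed to make the almost-monotonicity work'' is not correct as stated. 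Your fallback, strong unique continuation for $\operatorname{div}(M\nabla w)=0$ across the thin obstacle boundary with merely H\"older $M$, is likewise not a black box one can cite; in two dimensions the known route to such statements is again the quasiconformal reduction, which is exactly the paper's argument. In short, the paper uses \cref{t:thin-obstacle-general} to exclude infinite order \emph{first}, and only afterwards runs the truncated frequency machinery; your proposal reverses that order and leans on monotonicity/unique-continuation results that are not available at this regularity.
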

\begin{proof}
    As noticed in \cref{rmk:Thin2membraneDiffProperty} the difference function $w$ is a a variational solution of the thin obstacle problem \eqref{eqn:variationalThinObstDefProof}. Hence, let $f:\R^2 \to \R^2$ be the quasiconformal map from \cref{t:thin-obstacle-general} and the function $h \coloneqq w \circ f$ be a solution of the harmonic thin obstacle problem. In particular, $h$ cannot decay with infinite order, so that there exists a real number $\beta > 0$ such that
    \begin{equation}\label{eqn:thinObstacleLInftyFiniteOrder}
    \lim_{n \to +\infty} \left\| \frac{h\left( r_n x \right)}{r_n^{\beta}} \right\|_{L^{\infty}\left( B_r^+ \right)} \in (0, +\infty],
    \end{equation}
    for all vanishing sequence of radii $r_n$.
    
    In order to conclude the proof we recall the following properties of quasiconformal maps, for which we refer for instance to \cite[Chapter 3]{AstalaIwaniecMartin:PDEsQuasiconformal}:
    \begin{itemize}
        \item quasiconformal maps are locally invertible, with the inverse still being quasiconformal;

        \item quasiconformal maps are locally H\"older continuous.
    \end{itemize}
    The two properties above imply that
    \begin{equation}\label{eqn:onePhaseObstDiffQConformalDecay}
        \vert f^{-1}(z) \vert \ge c \vert z \vert^{\delta}
    \end{equation}
    for some constants $c>0$ and $0 < \delta \le 1$.

    Combining \eqref{eqn:thinObstacleLInftyFiniteOrder} with \eqref{eqn:onePhaseObstDiffQConformalDecay} gives \eqref{eqn:onePhaseObstLInftyFiniteOrder}.
\end{proof}
Once we have excluded the possibility of having infinite order, we have to show that the function $w$ has indeed a finite order. To this aim we employ a frequency function approach. Even though the branching points are isolated, since the function $w$ is a variational solution of \eqref{eqn:variationalThinObstDefProof} and the coefficients of the matrix $M$ are a priori not better then $C^{0, 1/2}$ regular, we choose not to proceed with a direct differentiation of the frequancy formula. Instead, we use an alternative approach of Garofalo and Petrosyan \cite{GarofaloPetrosyan2009:NewMonotonicityThinObstacle} based on the (almost-)monotonicity of a family of Weiss' monotonicity formulas, which seems more direct due to its variational nature and is much less affected from the regularity of the coefficients then the differentiation of Almgren's frequency formula. The price to pay is that one can only prove the (almost-)monotonicity for the truncated Almgren's formula, thus not excluding the presence of points with infinite frequency. This, however, is not really a problem for our analysis, since the points with infinite frequency have already been excluded. Notice that, in order to use this approach, it is important that $w$ is a minimizer as pointed out in \cref{rmk:Thin2membraneDiffProperty}, and not just a variational solution in the sense of \eqref{e:variational-inequality-thin}.

Since in the theory of thin obstacle problems with $C^{0, \alpha}$ coefficients these tools were developed in the recent work \cite{JeonPetrosyanSmitvega2020:AlmostMinimizersThinObstacleHolder}, we only limit ourselves to recall the main results without proof.

Let $k_0 \ge 2$ be e fixed real number and for all $0 < k < k_0$ define the Weiss monotonicity formula
\[
W_k^0(w; r) \coloneqq \frac{1}{r^{d + 2k - 2}} \left( \int_{B_r^+} \vert \nabla w \vert^2 - \frac{k}{r} \int_{ \partial B_{r} \cap \{ x_2>0 \} } w^2 \right)
\]
and the following Weiss'-type monotonocity formula
\[
W_k(w; r) \coloneqq \frac{e^{a_kr^{1/2}}}{r^{d + 2k - 2}} \left( 
\int_{B_r^+} \vert \nabla w \vert^2 - k \frac{1 - b r^{1/2}}{r} \int_{ \partial B_{r} \cap \{ x_2>0 \} } w^2 \right),
\]
where the real numbers $a_k$ and $b$ are defined as
\[
a_k \coloneqq 2 \left(d + 2k - 2\right) \quad \text{and} \quad b \coloneqq 2 \left(d + 2k_0\right).
\]
%
    %
    %
%
Then, by \cite[Theorem 7.1]{JeonPetrosyanSmitvega2020:AlmostMinimizersThinObstacleHolder}, we have that there exists a dimensional constant $r_0>0$ such that
    \begin{equation}\label{e:twoPhaseDiffWeissMonotone}
    \frac{d}{dr} W_k(w; r) \ge 0\quad\text{for all}\quad0 < r < r_0.
    \end{equation}
Now, if we define the height function $H(r)$ and the energy function D(r) as
\[
H(w; r) \coloneqq \frac{1}{r^{d-1}} \int_{ \partial B_{r} \cap \{ x_2>0 \} } w^2 \qquad \text{and} \qquad D(w; r) \coloneqq \frac{1}{r^{d-2}} \int_{ B_{r} \cap \{ x_2>0 \} } \vert \nabla w \vert^2,
\]
the classical Almgren's frequency function is given by
\[
N(w; r) \coloneqq \frac{D(w; r)}{H(w; r)}.
\]
Considering the following truncated Almgren's-type frequency formula
\[
\widetilde{N}_{k_0}(w; r) \coloneqq \min\left\{ \frac{1}{1-b r^{1/2}} N(w; r), k_0  \right\},
\]
as a consequence of \eqref{e:twoPhaseDiffWeissMonotone} we have that (see e.g. \cite[Section 7]{JeonPetrosyanSmitvega2020:AlmostMinimizersThinObstacleHolder})
%
    %
    \begin{equation}\label{e:twoPhaseDiffFrequencyMonotone}
    \widetilde{N}_{k_0}(w; r_1) \le \widetilde{N}_{k_0}(w; r_2)\qquad\text{for all}\qquad 0<r_1\le r_2<r_0\,.
    \end{equation}
    %
%
If we choose $k_0 > \gamma$, where $\gamma$ is as in \cref{lemma:onePhaseObstDiffLInftyDecayFinite}, \eqref{e:twoPhaseDiffFrequencyMonotone} and the non-degeneracy condition \eqref{eqn:onePhaseObstLInftyFiniteOrder} imply the following. Let the real number $l$ be defined as
\[
l \coloneqq \lim_{r \to 0^+} \widetilde{N}_{k_0}(w; r).
\]
Then, 
\begin{equation}\label{crl:twoPhaseDiffFrequencyLimit}
    1<l \le \gamma.
\end{equation}
In particular, $H(w; r)$ has a polynomial rate of decay equal to $l$, in the sense that
\begin{equation}\label{eqn:polynomialRateDecayFrequencyBlowup}
H(w; r) \le C r^{l} \text{ in } (0, r_0) \quad \text{and} \quad \lim_{r \to 0^+} \frac{r^{\beta}}{H(w; r)} = 0 \text{ for all } \beta \in (l, +\infty)   
\end{equation}
for some constant $C >0$.
\begin{remark}\label{rmk:twoPhaseDiffFrequencyLimit}
    The notion of polynomial rate of decay in \eqref{eqn:polynomialRateDecayFrequencyBlowup} in general does not imply that 
    \[
    \lim_{r \to 0^+} \frac{H(w; r)}{r^l} \in (0, +\infty).
    \]
    Indeed, it does not exclude that the limit vanishes, for instance with a behavior of the type
    \[
    H(w; r) \sim \frac{r^l}{\ln\left(\frac{1}{r} \right)}.
    \]
    This, however, will be achieved later on employing an epiperimetric inequality.
\end{remark}
The main consequence of \eqref{e:twoPhaseDiffFrequencyMonotone} and \eqref{crl:twoPhaseDiffFrequencyLimit} is the existence of non-degenerate $l-$homogeneous blow-ups. More precisely, the following lemma holds (we refer to \cite[Section 8]{JeonPetrosyanSmitvega2020:AlmostMinimizersThinObstacleHolder} for the details of the proof).
\begin{lemma}\label{lemma:twoPhaseDiffFrequencyBlowUp}
    Let the rescalings $w_r$ be defined as
\[
w_r \coloneqq \frac{w(r x)}{H(r)^{1/2}} \quad \text{in } B_1^+, \quad r \in (0, r_0).
\]
Then, for any vanishing sequence $\{r_n\}$, there exists a nontrivial positively l-homogeneous blow-up $w_0:B_1^+ \to \R$ with $w_0 \in H^1(B_1^+)$ (possibly depending on the chosen sequence), solution of the harmonic thin obstacle problem in $B_1^+$ and such that, up to a (non relabeled) subsequence
\begin{align*}
    & w_{r_n} \to w_0 \text{ weakly in } H^1(B_1^+), \\
    & w_{r_n} \to w_0 \text{ strongly in } C^{1, \alpha}(\overline{B_1^+} \cap K),
\end{align*}
for some $0 < \alpha \le 1$ and any set $K \Subset B_1$. In particular, since the origin is a branching point, it follows that
\[
l = 2m- \frac{1}{2} \quad \text{ for some } m \in \N \setminus \{ 0 \}
\]
\end{lemma}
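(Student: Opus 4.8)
The plan is to establish \cref{lemma:twoPhaseDiffFrequencyBlowUp} by combining the frequency‐function machinery already in place with the standard compactness argument for almost‐minimizers of the thin obstacle problem with H\"older coefficients, and then to identify the admissible homogeneities via the classification of global homogeneous solutions of the \emph{harmonic} thin obstacle problem. First I would fix a vanishing sequence $\{r_n\}$ and consider the rescalings $w_{r_n}(x)=w(r_nx)/H(w;r_n)^{1/2}$, so that $H(w_{r_n};1)=1$ by construction; the point is that the (almost‐)monotonicity \eqref{e:twoPhaseDiffFrequencyMonotone} together with \eqref{crl:twoPhaseDiffFrequencyLimit} gives a uniform bound $\widetilde N_{k_0}(w;r)\le k_0$ on a fixed interval, which, via the identity relating $rD'(w;r)/D(w;r)$ to $N$ and the scaling law $D(w_{r_n};\rho)=D(w;r_n\rho)/(r_n^{d-2}H(w;r_n))$, yields a uniform $H^1(B_1^+)$ bound on $\{w_{r_n}\}$. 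Hence up to a subsequence $w_{r_n}\rightharpoonup w_0$ weakly in $H^1(B_1^+)$, and one upgrades this to strong $C^{1,\alpha}_{\mathrm{loc}}$ convergence on $\overline{B_1^+}\cap K$ by the uniform $C^{1,\alpha}$ estimates for solutions of the thin obstacle problem, exactly as in \cite[Section 8]{JeonPetrosyanSmitvega2020:AlmostMinimizersThinObstacleHolder}; the strong convergence on a neighbourhood of the origin also guarantees $H(w_0;1)=1$, so $w_0$ is nontrivial.

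Next I would identify $w_0$ as a solution of the \emph{harmonic} thin obstacle problem: the matrix field $M(\nabla u,\nabla v)$ is $C^{0,\alpha}$ and, after the translation $z_0=0$, $M(0,0)=\nabla^2\mathcal F(0)=I$ by \eqref{e:definition-mathcal-F}, so the rescaled operators $M(r_nx)$ converge locally uniformly to the identity; passing to the limit in the variational inequality \eqref{eqn:variationalThinObstDefProof} (using the strong convergence of the gradients away from $\partial B_1$, the sign condition $w_{r_n}\ge 0$ on $B_1'$, and lower semicontinuity for the Dirichlet part) shows $w_0$ is a variational — hence classical — solution of the harmonic thin obstacle problem in $B_1^+$. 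To obtain homogeneity I would pass to the limit in the Weiss formula: from \eqref{e:twoPhaseDiffWeissMonotone} the function $r\mapsto W_k(w;r)$ has a limit $W_k(w;0^+)$ as $r\to0^+$ for each $k<k_0$, and scaling invariance of the limiting (constant‐coefficient) Weiss energy gives $W_l(w_0;\rho)=W_l(w;0^+)$ for all $\rho$, i.e.\ $\frac{d}{d\rho}W_l(w_0;\rho)\equiv 0$; the Rellich–Ne\v cas / Euler identity for the harmonic thin obstacle problem then forces $w_0$ to be positively $l$‐homogeneous, with $l=\lim_{r\to0^+}\widetilde N_{k_0}(w;r)$ the frequency at the origin. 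Here the consistency of the two definitions of $l$ (as frequency limit and as Weiss homogeneity) is the standard Almgren–Weiss correspondence.

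Finally, since the origin is by hypothesis a branching point, $w_0$ is a nontrivial global $l$‐homogeneous solution of the harmonic thin obstacle problem in $\R^2_+$ which is \emph{not} of the lowest degenerate type; by the classification of such solutions in two dimensions (equivalently, the analysis of the odd/even reflections and the structure of homogeneous harmonic functions with a Signorini condition on a half‐line), the only admissible homogeneities are $l\in\{3/2\}\cup\{2m-\tfrac12:m\in\N\}$, and the first of these is the non‐branching ``regular'' frequency, which is excluded at a branch point; hence $l=2m-\tfrac12$ for some $m\in\N\setminus\{0\}$. I expect the main obstacle to be the passage to the limit in the Weiss/frequency formulas with merely $C^{0,\alpha}$ coefficients — in particular justifying that $\widetilde N_{k_0}(w;0^+)$ coincides with the homogeneity of \emph{every} blow‐up and is independent of the chosen sequence — which is precisely why the argument is routed through the truncated, almost‐monotone quantities of \cite{JeonPetrosyanSmitvega2020:AlmostMinimizersThinObstacleHolder} rather than through a direct differentiation of Almgren's formula; the remaining steps (compactness, strong convergence, classification of 2D homogeneous Signorini solutions) are by now classical.
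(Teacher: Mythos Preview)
Your outline is exactly the argument the paper intends: the paper does not prove this lemma directly but defers to \cite[Section 8]{JeonPetrosyanSmitvega2020:AlmostMinimizersThinObstacleHolder}, and the compactness/Weiss/homogeneity steps you describe are precisely what that reference carries out.

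There is, however, a genuine confusion in your final classification paragraph. The full list of admissible homogeneities for the 2D harmonic thin obstacle problem is $\{2m : m\ge 1\}\cup\{2m-\tfrac12 : m\ge 1\}$; you omit the even integers and then redundantly single out $3/2$, which already equals $2m-\tfrac12$ with $m=1$. More importantly, you should \emph{not} exclude $l=3/2$: a branching point is exactly a boundary point of the contact set, so the blow-up $w_0$ has contact set equal to a half-line, and $l=3/2$ is the lowest frequency with this property. What is ruled out at a branching point are the \emph{even-integer} frequencies, whose homogeneous solutions have contact set either $\{0\}$ or all of $\R$ (hence do not see a one-sided detachment). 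With this correction the conclusion $l=2m-\tfrac12$ for some $m\in\N\setminus\{0\}$ follows, in agreement with the statement.
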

In order to complete the proof of \cref{thm:onePhaseAnalyticObstacle} we have to prove uniqueness for the frequency blow-up and an exact polynomial estimate. This is carried out via an epiperimetric inequality.

To begin with, from \cite{ColomboSpolaorVelichkov2020:DirectEpiperimetricThinObstacle} we recall the epiperimetric inequality for the non-integer frequencies in dimension $d=2$.
\begin{proposition}\label{prop:onePhaseObstacleEpiperimetric2D}
    Let $l = 2m-1/2$ for some $m \in \N \setminus \{ 0 \}$ and $u \in H_1\left( B_1^+ \right)$ be $l-$homogeneous with $w \ge 0$ on $B_1'$. Then, there exists $\eta > 0$ and a function $v \in H_1\left( B_1^+ \right)$ such that:
    \begin{enumerate}
        \item $v = u$ on $\partial B_1 \cap \{ x_2 \ge 0 \}$;
        
        \item $u \ge 0$ on $B_1'$;

        \item $W_l^0(v; 1) \le (1-\eta) W_l^0(u; 1)$.
    \end{enumerate}
\end{proposition}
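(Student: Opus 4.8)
First I would reduce the inequality to a one-dimensional spectral statement in the angular variable and then construct an explicit competitor, in the spirit of the direct approach of \cite{ColomboSpolaorVelichkov2020:DirectEpiperimetricThinObstacle}. Since $u$ is $l$-homogeneous, $W_l^0(u;1)$ depends only on the angular trace $c\coloneqq u|_{\partial B_1\cap\{x_2\ge 0\}}$: parametrizing the half-circle by $\theta\in[0,\pi]$ and writing $u=r^lc(\theta)$, one computes
\[
W_l^0(u;1)=\frac{1}{2l}\int_0^\pi\big((c')^2-l^2c^2\big)\,d\theta,
\]
and, by homogeneity, the hypothesis $u\ge 0$ on $B_1'$ is equivalent to $c(0)\ge 0$ and $c(\pi)\ge 0$. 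If $W_l^0(u;1)\le 0$ we take $v=u$, and the inequality holds for any $\eta\in(0,1)$; so I may assume $W_l^0(u;1)>0$.

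Next I would expand $c$ in the eigenfunctions $\varphi_\lambda$ of $-\varphi''=\lambda^2\varphi$ on $(0,\pi)$ with the boundary conditions matching the contact configuration of $u$ at the two poles (Neumann where $c>0$, Dirichlet where $c=0$). The set of angular frequencies $\lambda$ so obtained is discrete, and the crucial fact is that $l=2m-\tfrac12$ occurs in it precisely when one pole is a contact pole: $r^l\cos(l\theta)$ is then a genuine $l$-homogeneous solution of the harmonic thin obstacle problem, with contact set a half-line, using that $\cos(l\pi)=0$. In every configuration the spectrum leaves a gap above $l$: the smallest $\lambda$ with $\lambda>l$ is at least $l+\tfrac12$. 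Now, for the $\lambda$-mode, the homogeneous extension $r^l\varphi_\lambda$ contributes $\tfrac1{2l}(\lambda^2-l^2)$ to $W_l^0$ per unit $L^2$-mass, whereas the $\lambda$-harmonic extension $r^\lambda\varphi_\lambda$ contributes only $\lambda-l$. Since $\frac{2l}{\lambda+l}\le\frac{2l}{2l+1/2}=1-\eta$, with $\eta=\eta(l)=\tfrac1{4l+1}$, for every spectral $\lambda>l$, while the modes with $\lambda<l$ only make $W_l^0$ more negative, the competitor $v$ obtained by keeping the modes $\lambda\le l$ homogeneous and replacing each mode $\lambda>l$ by its $\lambda$-harmonic extension satisfies $W_l^0(v;1)\le(1-\eta)W_l^0(u;1)$; the cross terms vanish by orthogonality of the $\varphi_\lambda$ together with the finiteness of the radial integrals $\int_0^1 r^{\lambda+\mu-1}\,dr$.

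The remaining --- and genuinely hard --- point is that this competitor need not satisfy the unilateral constraint $v\ge 0$ on $B_1'$. At a contact pole it does, because the relevant eigenfunctions vanish there (this is exactly where $\cos(l\pi)=0$ enters); but at a pole where $c>0$ the radial profile $v(r,\cdot)$ may dip below zero for $0<r<1$, since only the value of the total trace, not of the individual modes, is controlled at that point. Repairing this at the cost of at most a further fraction of $W_l^0(u;1)$ is the heart of the matter, and is what is supplied by \cite{ColomboSpolaorVelichkov2020:DirectEpiperimetricThinObstacle}: one adds to $v$ a correction that vanishes on $\partial B_1\cap\{x_2\ge 0\}$ and is nonnegative on $B_1'$ --- obtained by solving an auxiliary thin obstacle problem --- so that admissibility is restored and $\int_{B_1^+}|\nabla v|^2$ increases by no more than $\tfrac\eta2 W_l^0(u;1)$. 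This step uses in an essential way that $l=2m-\tfrac12$ is not an integer (so the extremal homogeneous solution vanishes at the contact pole, and there is no bilateral integer-frequency eigenspace obstructing the construction) and that in $d=2$ the thin set $B_1'$ meets $\partial B_1$ in only two points. Replacing $\eta$ by $\eta/2$ then gives the conclusions (1)--(3), and the main obstacle, as indicated, is the quantitative repair of the constraint $v\ge 0$ on $B_1'$.
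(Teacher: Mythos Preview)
The paper does not give its own proof of this proposition; it simply recalls the statement from \cite{ColomboSpolaorVelichkov2020:DirectEpiperimetricThinObstacle}, and your sketch is precisely the direct method of that reference: reduce $W_l^0$ via homogeneity to the angular quadratic form $\tfrac{1}{2l}\int_0^\pi\big((c')^2-l^2c^2\big)$, exploit the spectral gap above $l=2m-\tfrac12$ to replace high modes by their $\lambda$-harmonic extensions and gain the factor $\tfrac{2l}{\lambda+l}\le 1-\eta$, and then repair the unilateral constraint on $B_1'$. There is therefore nothing to contrast --- you are outlining the cited argument, and you have correctly isolated the constraint repair as the substantive step and deferred it to the same reference the paper invokes.
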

Following \cite[Section 9]{JeonPetrosyanSmitvega2020:AlmostMinimizersThinObstacleHolder}, as a consequence of the \cref{prop:onePhaseObstacleEpiperimetric2D} we have that there exists constants $C, r_0, \delta, \eta > 0$ such that the following properties hold for all $0 < r < r_0$:
 \begin{enumerate}
        \item $0 \le W_l(w; r) \le C r^{\delta}$;

        \item $H(r) \le C r^{2l}$ and $D(r) \le C r^{2l-1}$;

        \item $H(r) \ge \eta r^{2l}$.
    \end{enumerate}
As a consequence of the above properties, we have the following
\begin{proposition}
Let the rescalings $\widetilde w_r$ be defined as
\[
\widetilde w_r \coloneqq \frac{w(r x)}{r^l} \quad \text{in } B_1^+, \quad r \in (0, r_0).
\]
Then, there exists a unique nontrivial and positively l-homogeneous blow-up $\widetilde w_0:B_1^+ \to \R$ with $\widetilde w_0 \in H^1(B_1^+)$, solution of the harmonic thin obstacle problem in $B_1^+$ and such that
\begin{align*}
    & \widetilde w_{r} \to \widetilde w_0 \text{ weakly in } H^1(B_1^+), \\
    & \widetilde w_{r} \to \widetilde w_0 \text{ strongly in } C^{1, \alpha}(\overline{B_1^+} \cap K),
\end{align*}
for some $0 < \alpha \le 1$ and any set $K \Subset B_1$.
Moreover, the following decay estimate holds:
\[
\frac{1}{r^d}\int_{B_r} \vert w - \widetilde w_0 \vert \le C r^{\delta/2}.
\]
\end{proposition}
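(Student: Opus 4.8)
The plan is to upgrade the sequential, non-degenerate $l$-homogeneous blow-ups of \cref{lemma:twoPhaseDiffFrequencyBlowUp} to a \emph{unique} blow-up, and then to convert the energy decay into an $L^1$ decay rate, following the scheme of \cite[Section 9]{JeonPetrosyanSmitvega2020:AlmostMinimizersThinObstacleHolder}. First I would record that, by the three listed properties coming from the epiperimetric inequality (the geometric decay $0\le W_l(w;r)\le Cr^\delta$ and the two-sided bound $\eta r^{2l}\le H(r)\le Cr^{2l}$), the normalized rescalings $\widetilde w_r = w(rx)/r^l$ are uniformly bounded in $H^1(B_1^+)$ with $\|\widetilde w_r\|_{L^2(\partial B_1^+)}$ bounded below away from $0$. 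Hence every sequence $r_n\to 0^+$ has a subsequence converging weakly in $H^1$ and (by the $C^{1,\alpha}$ estimates for solutions of the harmonic thin obstacle problem and compact embedding) strongly in $C^{1,\alpha}(\overline{B_1^+}\cap K)$ to some nontrivial $l$-homogeneous solution $\widetilde w_0$ of the harmonic thin obstacle problem; this is exactly \cref{lemma:twoPhaseDiffFrequencyBlowUp} after re-normalizing by $r^l$ rather than $H(r)^{1/2}$, which is legitimate precisely because $H(r)\sim r^{2l}$.

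The core step is \emph{uniqueness} of $\widetilde w_0$, i.e. independence of the limit from the chosen sequence. The standard argument (going back to the epiperimetric-inequality technique of Weiss, adapted in \cite{ColomboSpolaorVelichkov2020:DirectEpiperimetricThinObstacle,JeonPetrosyanSmitvega2020:AlmostMinimizersThinObstacleHolder}) is to show that $r\mapsto W_l(w;r)$ decaying geometrically forces the curve $r\mapsto \widetilde w_r\big|_{\partial B_1^+}$ to be Cauchy in $L^2(\partial B_1^+)$ as $r\to 0^+$. Concretely one differentiates $W_l(w;r)$, uses the first variation (Rellich--Ne\v{c}as / Pohozaev identity for the variational thin obstacle problem) to bound $\frac{d}{dr}\big(H(r)/r^{2l}\big)$ and the $L^2(\partial B_1)$-speed $\big\|\tfrac{d}{dr}\widetilde w_r\big\|_{L^2(\partial B_1^+)}$ in terms of $\sqrt{W_l(w;r)}\,/\,r \lesssim r^{\delta/2-1}$, and integrates in $r$: since $\delta>0$, $\int_0 r^{\delta/2-1}\,dr<\infty$, so $\widetilde w_r\big|_{\partial B_1^+}$ converges in $L^2(\partial B_1^+)$, and by homogeneity the convergence is of the full family, to a unique limit $\widetilde w_0$. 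The weak $H^1$ and $C^{1,\alpha}_{\mathrm{loc}}$ convergence of the full family then follows as before. I expect this monotonicity-to-uniqueness passage — in particular producing the differential inequality relating $\frac{d}{dr}W_l$, $W_l$, and the $L^2$-speed with the correct signs, under only $C^{0,1/2}$ coefficients — to be the main technical obstacle; the saving grace is that \eqref{e:twoPhaseDiffWeissMonotone} and its consequences are quoted wholesale from \cite{JeonPetrosyanSmitvega2020:AlmostMinimizersThinObstacleHolder}, so the work is bookkeeping rather than new estimates.

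Finally, for the decay estimate $\frac{1}{r^d}\int_{B_r}|w-\widetilde w_0|\le Cr^{\delta/2}$: by $l$-homogeneity of $\widetilde w_0$ one has $\widetilde w_0(rx)=r^l\widetilde w_0(x)$, so $\frac{1}{r^d}\int_{B_r}|w-\widetilde w_0|\le C r^{l}\,\|\widetilde w_r-\widetilde w_0\|_{L^1(B_1)}$, and one is reduced to estimating $\|\widetilde w_r-\widetilde w_0\|_{L^1(B_1)}$. Interpolating between the $L^2(\partial B_1^+)$ convergence rate on spheres (from the integrated differential inequality, which gives $\|\widetilde w_\rho\big|_{\partial B_1^+}-\widetilde w_0\big|_{\partial B_1^+}\|_{L^2}\lesssim \rho^{\delta/2}$ for $\rho\le r$) and the uniform $H^1$ bound, integrating over the dyadic annuli between radius $r$ and $2r$, $r/2$ and $r$, \dots, and summing the resulting geometric series in $\delta/2>0$, yields $\|\widetilde w_r-\widetilde w_0\|_{L^1(B_1)}\le C r^{\delta/2}$, hence the claim after absorbing the harmless factor $r^l$. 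This last part is routine once uniqueness with a rate is in hand.
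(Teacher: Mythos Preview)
Your proposal is correct and follows essentially the same route the paper takes: the paper does not prove this Proposition at all but simply states it ``as a consequence of the above properties'', having already pointed to \cite[Section~9]{JeonPetrosyanSmitvega2020:AlmostMinimizersThinObstacleHolder} for the whole package (decay of $W_l$, two-sided bounds on $H$, and the uniqueness-with-rate argument). Your outline --- passing from $H(r)^{1/2}$-normalization to $r^l$-normalization via $H(r)\sim r^{2l}$, showing $r\mapsto \widetilde w_r|_{\partial B_1^+}$ is Cauchy in $L^2$ from the geometric decay of $W_l$, and then converting this into the $L^1$ rate --- is exactly that standard scheme, and in fact you supply more detail than the paper does.
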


\section{Proof of \cref{t:one-sided-two-phase}}
To begin with, we perform two hodograph transformations on $u$ and $v$ separately. Let us proceed with $u$. The computations for $v$ are analogous.

 \medskip
 
 Let $x = (x', x_N)$ and consider the map 
 \[
 \Phi(x) = \left( x', \frac{u(x)}{\sqrt{\Lambda_u}} \right).
 \]
 Since $\partial_N u(0) = 1$, $\Phi$ is locally invertible, and introduce the function $\tilde{u}(y)$ defined via the relation
 \[
 \Phi^{-1}(y) = (y', \tilde{u}(y)).
 \]
 Notice that by construction
 \[
 \tilde{u}(x', u(x)) = x_N.
 \]
 Differentiating this relation, we get that
 \[
    \begin{cases}
        \partial_{y'} \tilde{u} + \partial_{x'} u \, \partial_N \tilde{u} = 0, \\
        \partial_N u \, \partial_N \tilde{u} = 1.
    \end{cases}
 \]
 If we consider the linearized functions $w_{u} = \tilde{u} - x_N$ and $w_{v} = \tilde{v} - x_N$, the one-sided energy functional
 \[
 \int_{B_1} \left(\vert \nabla u \vert^2 + \vert \nabla v \vert^2 \right) + \Lambda_u \vert \{ u>0 \} \vert + \Lambda_v \vert \{ v>0 \} \vert
 \]
 can be rewritten as
 \[
 \Lambda_u \int_{B_1^+} \frac{\vert \nabla w_u \vert^2}{1+\partial_N w_u} + \Lambda_v \int_{B_1^+} \frac{\vert \nabla  w_v \vert^2}{1+\partial_N w_v}.
 \] 
 This implies that the functions $w_u$ and $w_v$ are solutions of a (quasilinear) thin two-membrane problem of the form \eqref{e:thin-two-membrane-system}, with
 \[
 \mathcal{F}(x, y) = \frac{x^2+y^2}{1+y}.
 \]
\cref{t:one-sided-two-phase} (a) is now a direct consequence of \cref{t:thin-two-membrane-general}.

Concerning \cref{t:one-sided-two-phase} (b), let
\[
w = w_v - w_u.
\]
By the quasilinearity, the function $w$ is a solution of a viariable coefficients thin-obstacle problem, with $C^{0, 1/2}$ regular coefficients.
\begin{equation*}
    \begin{cases}
        \diverg\left( M(x) \nabla w \right) = 0 & \text{in } B_1^+ , \\
        w \ge 0 & \text{on } \{ x_N = 0 \} , \\
        M(x) \nabla w \cdot e_N \le 0 & \text{on } \{ x_N = 0 \} , \\
        M(x) \nabla w \cdot e_N = 0 & \text{on } \{ w > 0 \} \cap \{ x_N = 0 \}.
    \end{cases}
\end{equation*}
for some symmetric and uniformly elliptic matrix field $M(x)$. Then, in order to study the rate of decay of the function $w$, one can proceed as in \cref{sec:proofThmonePhaseAnalyticObstacle}.

\appendix

\section{Extension of analytic free boundaries}

Let $u:D\to\R$ be a non-negative Lipschitz continuous function in an open set $D\subset\R^2$. We will say that $u$ is a classical solution of the one-phase Bernoulli problem in $D$ if $\partial\{u>0\}$ is $C^1$-regular in $D$ and if $u$ is $C^1$ in the set $\overline{\{u>0\}}\cap D$ and if
\begin{align*}
\Delta u=0\quad\text{in}\quad \{u>0\}\cap D\ ,\quad|\nabla u|=1\quad\text{on}\quad \partial\{u>0\}\cap D\,.
\end{align*}
It is well-known (see \cite{AltCaffarelli:OnePhaseFreeBd} and \cite{KinderlehrerNirenberg}) that if $u$ is a classical solution, then the free boundary $\partial\{u>0\}\cap D$ is analytic.

\begin{theorem}\label{t:1}
Let $u:D\to\R$ be a classical solution to the one-phase Bernoulli problem in the open set $D\subset\R^2$. Then, the free boundary $\Gamma=\partial\{u>0\}\cap D$ is the union of analytic curves. Moreover, if $(x_0,y_0)\in \Gamma$ is a free boundary point, then the solution $u$ can be extended to a harmonic function $\widetilde u$ in a ball $B_r(x_0,y_0)\subset D$, whose nodal set is precisely the free boundary $\Gamma$, that is, $\{\widetilde u=0\}\cap B_r(x_0,y_0)=\Gamma\cap B_r(x_0,y_0)$.
\end{theorem}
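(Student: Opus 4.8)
The plan is to localise near a free boundary point $p_0=(x_0,y_0)\in\Gamma$ and to produce the extension by flattening $\Gamma$ with a conformal change of variables, after which the extension is obtained by the Schwarz reflection principle. Note first that the assertion that $\Gamma$ is a union of analytic curves is immediate from the cited regularity (\cite{AltCaffarelli:OnePhaseFreeBd,KinderlehrerNirenberg}), which tells us that $\Gamma=\partial\{u>0\}\cap D$ is a real-analytic embedded $1$-submanifold of $D$, hence the disjoint union of its connected components, each a connected analytic curve; so the substance of the theorem is the harmonic extension with nodal set exactly $\Gamma$.

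First I would use the analyticity of $\Gamma$ to build a conformal flattening. Parametrise $\Gamma$ near $p_0$ by a real-analytic embedding $\gamma\colon(-\delta,\delta)\to\R^2\cong\C$ with $\gamma(0)=p_0$ and $\gamma'\ne0$; being real-analytic, $\gamma$ extends to a holomorphic map on a disc $D_\rho\subset\C$ with $\gamma'(0)\ne0$, and after shrinking $\rho$ it becomes a biholomorphism of $D_\rho$ onto a neighbourhood $U'$ of $p_0$ with $\gamma(D_\rho\cap\R)=\Gamma\cap U'$ and, replacing $\gamma(\zeta)$ by $\gamma(-\zeta)$ if necessary, $\gamma(D_\rho^+)=\{u>0\}\cap U'$, where $D_\rho^\pm:=D_\rho\cap\{\pm\operatorname{Im}\zeta>0\}$. (A minor point is to shrink $U'$ so that no far-away branch of $\Gamma$ re-enters it; this is possible since $\Gamma$ is relatively closed in $D$.)

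Then I would reflect. The pullback $g:=u\circ\gamma$ is harmonic in $D_\rho^+$, continuous up to $D_\rho\cap\R$ by the hypothesis $u\in C^1(\overline{\{u>0\}}\cap D)$, vanishes on $D_\rho\cap\R$, and is strictly positive on $D_\rho^+$ because $u>0$ on $\{u>0\}$. By the classical Schwarz reflection principle the odd extension
\[
\widetilde g(\zeta):=\begin{cases} g(\zeta), & \operatorname{Im}\zeta\ge0,\\ -g(\overline\zeta), & \operatorname{Im}\zeta<0,\end{cases}
\]
is harmonic in $D_\rho$; moreover $\widetilde g>0$ on $D_\rho^+$, $\widetilde g<0$ on $D_\rho^-$, and $\widetilde g=0$ exactly on $D_\rho\cap\R$. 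Pushing forward, $\widetilde u:=\widetilde g\circ\gamma^{-1}$ is harmonic in $U'$ (a harmonic function precomposed with the holomorphic map $\gamma^{-1}$), agrees with $u$ on $\overline{\{u>0\}}\cap U'=\gamma(\overline{D_\rho^+})$, and satisfies $\{\widetilde u=0\}\cap U'=\gamma(D_\rho\cap\R)=\Gamma\cap U'$. Choosing $r>0$ with $B_r(p_0)\subset U'\cap D$ completes the proof.

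The only genuinely delicate point is pinning down the nodal set \emph{exactly}: one needs both that $g$ has no interior zeros in $D_\rho^+$ — which is why it matters that $\Gamma$ is the full topological boundary of the positivity set $\{u>0\}$ and not merely an interior zero level — and that the flattening $\gamma$ carries the real segment \emph{onto} $\Gamma\cap U'$ with nothing left over, which forces the shrinking of $U'$ mentioned above. The condition $|\nabla u|=1$ on $\Gamma$ enters only through the cited analyticity of $\Gamma$ (and guarantees in addition that $\nabla\widetilde u(p_0)\ne0$, since then $\partial_{x_1}g\equiv0$ on $D_\rho\cap\R$ while $|\partial_{x_2}g(0)|=|\nabla u(p_0)|\,|\gamma'(0)|\ne0$). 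An alternative that avoids choosing the orientation of $\gamma$ is to reflect $\log|F'|$, where $F=u+iv$ is a local holomorphic potential with $|F'|=|\nabla u|=1$ on $\Gamma$: this extends $F$ holomorphically across $\Gamma$ to a local biholomorphism $\widetilde F$ with $\operatorname{Re}\widetilde F=0$ on $\Gamma$, and then $\{\operatorname{Re}\widetilde F=0\}=\widetilde F^{-1}(i\R)$ is automatically a single analytic arc, equal to $\Gamma$ near $p_0$.
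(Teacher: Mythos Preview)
Your argument is correct, but it follows a genuinely different route from the paper's. You take the analyticity of $\Gamma$ as a black box from \cite{AltCaffarelli:OnePhaseFreeBd,KinderlehrerNirenberg}, then build a conformal flattening $\gamma$ out of the holomorphic extension of an analytic arclength parametrisation of $\Gamma$, and finish with the odd Schwarz reflection of $g=u\circ\gamma$. The paper instead uses the \emph{conformal hodograph transform}: it takes the harmonic conjugate $U$ of $u$ and sets $T=(U,u)$, which is itself a conformal map sending $\Gamma$ to the real axis; on the inverse $S=(V,v)$ one has $|\nabla v|=1$ along $\{y'=0\}$, so the complex gradient $Q=\partial_{z'}v$ satisfies $|Q|=1$ there, and the M\"obius transform $P=-i(Q+i)/(Q-i)$ has $\operatorname{Im}P=0$ on the real axis, whence Schwarz reflection extends $P$, then $Q$, then $S$, then $T$—and with $T$ the function $u$ itself—across $\Gamma$. (Your ``alternative'' with $F=u+iv$ and $|F'|=1$ on $\Gamma$ is essentially this hodograph picture seen from the other side.) Your approach is shorter and entirely adequate for \cref{t:1} in isolation; what the paper's approach buys is that it re-proves the analyticity of $\Gamma$ from the $C^{1}$ hypothesis rather than importing it, and—more to the point here—it sets up exactly the machinery that is \emph{inverted} in \cref{s:proof-t2} to prove \cref{t:2}, which is the real purpose of the appendix. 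Since your flattening $\gamma$ is extrinsic to $u$, it does not by itself suggest how to run the construction backwards from a prescribed analytic curve to a solution.
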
	

In this section, we prove that also the converse is true. Our main result is the following. 

\begin{theorem}\label{t:2}
Let $f:(-R,R)\to\R$ be an analytic function on the interval $\mathcal I_R=(-R,R)$ such that $f(0)=f'(0)=0$. Then, there are $\rho\in(0,R)$ and a function $u:\mathcal I_\rho\times \mathcal I_\rho\to\R$ which is a classical solution of the one-phase Bernoulli problem in $\mathcal I_\rho\times \mathcal I_\rho$ with free boundary which is given precisely by the graph of $f$ over $\mathcal I_\rho$.
\end{theorem}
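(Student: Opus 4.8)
The plan is to construct $u$ by prescribing it on the graph of $f$ together with a Neumann-type condition there, and then solving a Cauchy problem. Concretely, since $f$ is analytic with $f(0)=f'(0)=0$, near the origin the curve $\Gamma=\{y=f(x)\}$ is an analytic arc passing through $0$ with horizontal tangent. I would look for $u$ harmonic in the region $\{y>f(x)\}$ (locally), vanishing on $\Gamma$, with $|\nabla u|=1$ on $\Gamma$; since $u=0$ on $\Gamma$ forces $\nabla u$ to be normal to $\Gamma$ there, the Bernoulli condition becomes $\partial_\nu u=1$ on $\Gamma$ (choosing the sign so that $u>0$ above). Thus I must solve the Cauchy problem $\Delta u=0$ with Cauchy data $u=0$, $\partial_\nu u=1$ on the analytic curve $\Gamma$. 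By the Cauchy–Kovalevskaya theorem (applicable because $\Gamma$ is analytic, non-characteristic for the Laplacian, and the data are analytic), there is a unique analytic solution $u$ in a neighborhood $B_r(0)$ of the origin.

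It then remains to check that this $u$ genuinely solves the one-phase Bernoulli problem in a possibly smaller box $\mathcal I_\rho\times\mathcal I_\rho$ with free boundary exactly $\Gamma$. First, $u$ is analytic, hence $C^1$ up to $\Gamma$, and $\Delta u=0$ everywhere in $B_r(0)$. Second, I need $u>0$ strictly on the side $\{y>f(x)\}$ and $u\le 0$ (in fact we then set $u:=u^+$, so $u\equiv 0$) on $\{y<f(x)\}$, at least in a small box. For this I expand $u$ along the normal to $\Gamma$: writing $t$ for the signed normal distance to $\Gamma$, the Cauchy data give $u = t + O(t^2)$ uniformly for points whose projection onto $\Gamma$ stays in a compact subarc, so for $|t|$ small, $u$ has the sign of $t$; shrinking to a box $\mathcal I_\rho\times\mathcal I_\rho\subset B_r(0)$ with $\rho$ small enough that every point of the box is within the good normal neighborhood of $\Gamma$, we get $\{u>0\}\cap(\mathcal I_\rho\times\mathcal I_\rho)=\{y>f(x)\}\cap(\mathcal I_\rho\times\mathcal I_\rho)$ and $\partial\{u>0\}\cap(\mathcal I_\rho\times\mathcal I_\rho)=\Gamma\cap(\mathcal I_\rho\times\mathcal I_\rho)$. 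Replacing $u$ by $u^+$ on the box then yields a genuine classical solution: it is Lipschitz and non-negative, harmonic where positive, $C^1$ up to the free boundary (from the analytic side), with $|\nabla u|=1$ on $\Gamma$ by construction, and free boundary precisely the graph of $f$.

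I expect the only real obstacle to be the sign/geometry step: guaranteeing that on a full box $\mathcal I_\rho\times\mathcal I_\rho$ the sign of $u$ is controlled by the normal coordinate. The clean way is to use the analytic change of variables straightening $\Gamma$ (the map $(x,y)\mapsto(s,t)$ with $s$ an arclength-type parameter along $\Gamma$ and $t$ the signed normal distance), which is an analytic diffeomorphism on a neighborhood of $0$ because $f$ is analytic; in these coordinates $u(s,t)=t\,(1+tg(s,t))$ with $g$ analytic, so the sign of $u$ equals that of $t$ once $|t|<\|g\|_\infty^{-1}$, and one then picks $\rho$ so that the box is contained in that normal tube. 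The rest — analyticity of $\Gamma$ as a union of analytic curves is immediate from $f$ analytic; $C^1$ regularity of $u$ up to $\Gamma$ and the Bernoulli condition are built into the Cauchy data — is routine. An alternative to Cauchy–Kovalevskaya, in the spirit of the remark in \S\ref{sec:proofThmonePhaseAnalyticObstacle}, is to use the partial hodograph/conformal straightening directly: map the box to a half-disc by a conformal map sending $\Gamma$ to a diameter (possible since $\Gamma$ is an analytic arc), solve the mixed Dirichlet/Neumann problem there with constant data, and pull back; this also produces the desired $u$ and makes the harmonic extension statement of \cref{t:1} transparent, but Cauchy–Kovalevskaya is the shortest route to existence.
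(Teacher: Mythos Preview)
Your proof is correct and takes a genuinely different route from the paper's. You invoke Cauchy--Kovalevskaya directly to solve $\Delta u=0$ with analytic Cauchy data $u=0$, $\partial_\nu u=1$ on the analytic arc $\Gamma=\{y=f(x)\}$ (the Laplacian has no real characteristics, so $\Gamma$ is non-characteristic), and then control the sign of $u$ via the first-order normal expansion $u=t(1+tg)$ before taking the positive part. The paper instead \emph{inverts the conformal hodograph transform} of \cref{s:proof-t1}: from $f$ it builds the arclength $\eta$, defines analytic functions $\alpha,\beta$ on $\eta(\mathcal I_\rho)$ with $\alpha^2+\beta^2=1$ encoding $f'$ and $\eta'$, packages them as $Q=\alpha-i\beta$ with $|Q|=1$ on the real axis, extends $Q$ holomorphically, integrates the resulting closed $1$-forms to a conformal map $S=(V,v)$, and finally takes $u$ to be the second component of $S^{-1}$; an ODE uniqueness argument then identifies the resulting free boundary with the graph of $f$. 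Your approach is shorter and more conceptual---indeed the authors themselves note that Cauchy--Kovalevskaya ``could probably'' give the result but that they ``couldn't find a precise reference''---while the paper's construction is fully explicit, self-contained, and mirrors step by step the forward transform used to prove \cref{t:1}, which is presumably why they preferred it. Your sign/geometry step via analytic normal coordinates is the one place requiring care, and you handle it correctly.
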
	

In \cref{s:proof-t1} we will use the conformal hodograph transform from \cite{DePhilippisSpolaorVeluchkov2021:QuasiConformal2D} to give another proof of \cref{t:1}, which follows step-by-step the procedure from \cite{DePhilippisSpolaorVeluchkov2021:QuasiConformal2D}; the only difference lies in the absence of inclusion constraints, which simplifies the analysis and hopefully makes the proof easier to follow. In \cref{s:proof-t2} we invert this procedure in order to construct analytic solutions with prescribed free boundary, thus proving \cref{t:2}.

\subsection{Analyticity of the classical solutions to the one-phase problem}\label{s:proof-t1}

Let $u$ be a classical solution of the one-phase problem and let $(0,0)\in\partial\{u>0\}$.\\ We can suppose that there are $\rho>0$ and a $C^1$ function 
$$f:\mathcal I_\rho\to\R\ ,\quad \text{where $\mathcal I_\rho$ is the interval $(-\rho,\rho)$\,},$$
which describes the free boundary $\partial \Omega_u$ in the square $\mathcal I_\rho\times\mathcal I_\rho$. We assume that
$$f(0)=f'(0)=0\ ,\quad -\rho< f< \rho \quad\text{in}\quad\mathcal I_\rho,$$
and that 
$$\Omega_\rho:=\Big\{(x,y)\in \mathcal I_\rho\times\mathcal I_\rho\ :\ y>f(x)\Big\}=\{u>0\}\cap (\mathcal I_\rho\times\mathcal I_\rho).$$
Thus, the free boundary in $\mathcal I_\rho\times\mathcal I_\rho$ is the graph of $f$:
$$\Gamma_\rho:=\Big\{(x,y)\in \mathcal I_\rho\times\mathcal I_\rho\ :\ y=f(x)\Big\}=\partial\{u>0\}\cap (\mathcal I_\rho\times\mathcal I_\rho).$$
In particular, since 
\begin{equation}\label{e:identities-along-Gamma}
u(t,f(t))=0\quad\text{and}\quad \partial_x u(t,f(t))+f'(t)\partial_y u(t,f(t))=0\quad\text{for every}\quad t\in\mathcal I_\rho\,,
\end{equation}
we have that $\partial_{x}u(0,0)=0$ and so the free boundary condition gives that 
	$$ \partial_{x}u(0,0)=0\qquad\text{and}\qquad\partial_y u(0,0)= 1\,.$$

\subsubsection{The harmonic conjugate of $u$} We define the function 	
$$U: \Omega_\rho\cup \Gamma_\rho\to\R\,,$$
as the line integral $\,\displaystyle\int_\sigma \alpha\,$ of the closed $1$-form 
$$\alpha:=\partial_y u(x,y)\,dx-\partial_xu(x,y)\,dy\,,$$
over any curve
$$\sigma:[0,1]\to\Omega_\rho\cup \Gamma_\rho$$
connecting the origin $(0,0)$ to $(x,y)$. Notice that, since $\Omega_\rho$ is simply connected, the value $U(x,y)$ depends only on the end-point $(x,y)$ and not on the choice of the curve $\sigma$. Moreover, by construction $U\in C^{1,\alpha}(\Omega_\rho\cup \Gamma_\rho)$ and 
\begin{equation}\label{e:identities-U}
U(0,0)=0,\quad \partial_xU=\partial_yu\quad\text{and}\quad \partial_yU=-\partial_xu\quad\text{in}\quad \Omega_\rho.
\end{equation}
\begin{remark}[Definition of $\eta$]\label{rem:eta}
We notice that the value of $U$ at a free boundary point 
$$(x,f(x))\in \Gamma_\rho\ ,$$ 
is precisely the length of the curve  
	$$\sigma:[0,x]\to\R^2,\quad \sigma(t)=(t,f(t)).$$
In fact, by the definition of $U$, we have
	\begin{align*}
	U(x,f(x)):=\int_\sigma\alpha&=\int_0^x\Big(\partial_yu(t,f(t))-\partial_xu(t,f(t))f'(t)\Big)\,dt\\
	&=\int_0^x|\nabla u|(t,f(t))\sqrt{1+f'(t)^2}\,dt=\int_0^x\sqrt{1+f'(t)^2}\,dt\,.
	\end{align*}
	In what follows, we will use the notation 
	$$\eta(x):=U(x,f(x))=\int_0^x\sqrt{1+f'(t)^2}\,dt.$$
\end{remark}
	
\subsubsection{The conformal hodograph transform}	
We define the change of coordinates 
	$$x'=U(x,y)\,,\quad y'=u(x,y)\,,$$
	given by the $C^{1,\alpha}$-regular map 
	$$T:\Omega_\rho\cup\Gamma_\rho\to\R^2\cap\{y'\ge 0\}\,,\qquad T(x,y)=(x',y')\,.$$
	By the definition of $U$ and the fact that  $\partial_ y u(0,0)\ge 1$, we have that for $\rho$ small enough, the map $T$ is invertible. 
	\begin{remark}
	For $\rho$ small enough, the set $\ T\big(\Omega_\rho\cup\Gamma_\rho\big)\ $ is a relatively open subset the upper half-plane $\R^2\cap\{y'\ge 0\}$ and we have the identities: 
	$$T(\Omega_\rho)=T\big(\Omega_\rho\cup\Gamma_\rho\big)\cap\{y'>0\}\qquad\text{and}\qquad T(\Gamma_\rho)=T\big(\Omega_\rho\cup\Gamma_\rho\big)\cap\{y'=0\}.$$ 
	\end{remark}

Let $S$ be the inverse of $T$:
		$$S(x',y')=\big(V(x',y'),v(x',y')\big)\ ,\qquad S:T\big(\Omega_\rho\cup\Gamma_\rho\big)\to \Omega_\rho\cup\Gamma_\rho\ .$$
 We will sometimes write it in coordinates as
	$$S(x',y')=(x,y)\,,\quad x=V(x',y')\,,\quad y=v(x',y')\,.$$
\begin{remark}\label{rem:graph}
We notice that the geometric information about the free boundary $\Gamma_\rho$ is carried by the second component of $S$.
In fact, in the new coordinates $(x',y')$, the free boundary $\Gamma_\rho$ is the graph of the function 
$$x'\mapsto v(x',0).$$ 
In order to show this, let us fix a point $(x,y)\in\Omega_\rho\cup \Gamma_\rho$. Notice that  
$$(x,y)\in\Gamma_\rho\ \Leftrightarrow\ y=f(x)\ \Leftrightarrow\ u(x,y)=0\,,$$
and that we have the relation
\begin{equation}\label{e:inverse-identity-y}
y=v\big(U(x,y),u(x,y)\big)\qquad\text{for every}\qquad (x,y)\in \Omega_\rho\cup\Gamma_\rho\ .
\end{equation}
Thus, 
$$(x,y)\in\Gamma_\rho\ \Leftrightarrow\ u(x,y)=0\ \Leftrightarrow\ y=v(U(x,y),0).$$
Now, since $S$ is a one-to-one correspondance between the sets $T(\Gamma_\rho)$ and $\Gamma_\rho$, we get that 
$$y=v(U(x,y),0)\ \Leftrightarrow\ y=v(U(x,f(x)),0)\ \Leftrightarrow\ y=v(x',0),$$
where we used that 
$$x'=U(x,f(x))=\eta(x).$$
		As a consequence, we get the identity
		$$f(x)=v(\eta(x),0)\qquad\text{for every}\qquad x\in\mathcal I_\rho\,.$$
\end{remark}
\begin{definition}
We will call $v$ conformal hodograph transform of $u$.	
\end{definition}		
	\begin{lemma}\label{lem:con1ph}
		Let $T=(U,u)$ and $S=(V,v)$ be as above. Then, the function
		$$v:T(\Omega_\rho)\cup T(\Gamma_\rho)\to\R,$$
		is  $C^{1}$-regular in $T(\Omega_\rho)\cup T(\Gamma_\rho)$, $C^\infty$-regular in the open set $T(\Omega_\rho)$, and is such that
\begin{equation}\label{e:equations-v}
\Delta v=0\quad\text{in}\quad T(\Omega_\rho)\ ,\qquad |\nabla v|= 1\quad\text{on}\quad T(\Gamma_\rho).
\end{equation}
Moreover, for every $x\in \Gamma_\rho$, we have the identities
\begin{equation}\label{eq:etauu}
f'(x)= \frac{\partial_{x'}v(\eta(x),0)}{\partial_{y'}v(\eta(x),0)}
\qquad \text{and}\qquad 
\eta'(x)=\frac{1}{\partial_{y'}v(\eta(x),0)}.
\end{equation}
	\end{lemma}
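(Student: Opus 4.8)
The plan is to exploit the fact that the conformal hodograph map $T=(U,u)$ is, by construction, a conformal (holomorphic) change of coordinates on $\Omega_\rho$, so its inverse $S=(V,v)$ is holomorphic as well, and then simply transport the equations $\Delta u = 0$, $|\nabla u|=1$ through $S$. Concretely, first I would observe that the Cauchy--Riemann equations \eqref{e:identities-U}, namely $\partial_x U = \partial_y u$ and $\partial_y U = -\partial_x u$, say exactly that $g := U + \icomp\, u$ is holomorphic on $\Omega_\rho$, with $g' = \partial_x U + \icomp\, \partial_x u = \partial_y u - \icomp\,\partial_x u \neq 0$ near the origin since $\partial_y u(0,0)=1$ and $\partial_x u(0,0)=0$. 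Hence $T$ is a conformal diffeomorphism onto its image for $\rho$ small, and its inverse $S = (V,v)$ is holomorphic on $T(\Omega_\rho)$, i.e. $V+\icomp\, v$ is a holomorphic function of $x'+\icomp\, y'$. Holomorphy immediately gives that $v$ is $C^\infty$ (in fact real-analytic) in the open set $T(\Omega_\rho)$ and harmonic there, which is the first assertion of \eqref{e:equations-v}; the $C^1$ regularity up to $T(\Gamma_\rho)$ is inherited from the $C^{1,\alpha}$ regularity of $T$ and $S$ already recorded above.

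Next I would establish the boundary gradient identity $|\nabla v| = 1$ on $T(\Gamma_\rho)$. Since $S$ is the inverse of $T$, the chain rule gives $DS(x',y') = \big(DT(x,y)\big)^{-1}$ at corresponding points; and because $g' = \partial_y u - \icomp\,\partial_x u$ has modulus $|g'| = |\nabla u|$, the conformal factor of $T$ is exactly $|\nabla u|$, so the conformal factor of $S$ is $1/|\nabla u|$ evaluated at the corresponding point. In particular, since $|\nabla v|$ equals the modulus of the derivative of the holomorphic map $V+\icomp\, v$, we get $|\nabla v|(x',y') = 1/|\nabla u|(S(x',y'))$. On $T(\Gamma_\rho)$ the corresponding point lies on the free boundary $\Gamma_\rho$, where $|\nabla u| = 1$ by the classical free boundary condition; hence $|\nabla v| = 1$ there, proving \eqref{e:equations-v}.

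Finally I would prove the two formulas \eqref{eq:etauu}. By Remark \ref{rem:graph} we have the pointwise identity $f(x) = v(\eta(x),0)$ on $\mathcal I_\rho$, and by Remark \ref{rem:eta} we have $\eta'(x) = \sqrt{1+f'(x)^2}$. Differentiating $f(x)=v(\eta(x),0)$ in $x$ gives $f'(x) = \partial_{x'}v(\eta(x),0)\,\eta'(x)$. To close the system I need one more relation between $\partial_{x'}v$, $\partial_{y'}v$ and $f'$ along the boundary; this comes from differentiating the inverse identity \eqref{e:inverse-identity-y}, $y = v(U(x,y),u(x,y))$, restricted to $\Gamma_\rho$, or equivalently from the Cauchy--Riemann equations for $V+\icomp\, v$ together with the fact that on $T(\Gamma_\rho)$ the tangent direction of $\Gamma_\rho$ is $(1,f'(x))$ up to scaling. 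Combining $|\nabla v|=1$ on $T(\Gamma_\rho)$ with the relation $f'(x) = \partial_{x'}v(\eta(x),0)\,\eta'(x)$ and $\eta'(x)=\sqrt{1+f'(x)^2}$, a short algebraic manipulation yields $\partial_{y'}v(\eta(x),0) = 1/\eta'(x)$ and $\partial_{x'}v(\eta(x),0) = f'(x)/\eta'(x)$, which is precisely \eqref{eq:etauu}.

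The step I expect to be the main obstacle is making the boundary bookkeeping in \eqref{eq:etauu} fully rigorous: one must check that the holomorphic map $S$ extends $C^1$-regularly up to the (analytic-image of the) boundary segment $T(\Gamma_\rho)$ and that the Cauchy--Riemann relations, and hence the conformal-factor identity, persist up to the boundary, so that $|\nabla v|=1$ can legitimately be combined with the chain-rule differentiation of $f(x)=v(\eta(x),0)$. Everything else is a routine translation of "conformal map of a one-phase solution" into holomorphic-function language.
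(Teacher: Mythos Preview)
Your approach is essentially the same as the paper's: both use that $T$ is a conformal map (via the Cauchy--Riemann relations \eqref{e:identities-U}), hence $S$ is conformal, hence $v$ is harmonic, and then transport the boundary condition $|\nabla u|=1$ through the conformal factor to get $|\nabla v|=1$. The $C^1$ boundary regularity and persistence of the gradient identities up to $T(\Gamma_\rho)$, which you flag as the main obstacle, is handled exactly as you anticipate: the $C^{1,\alpha}$ regularity of $u$ (and hence of $U$, $T$, and $S$) up to the boundary is already recorded before the lemma.

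The one place where the paper proceeds differently is the derivation of \eqref{eq:etauu}. Rather than differentiating $f(x)=v(\eta(x),0)$ and combining with $|\nabla v|=1$, the paper differentiates the inverse identity $y=v(U(x,y),u(x,y))$ in $x$ and $y$ directly, obtaining the explicit pointwise formulas
\[
\partial_{y'}v(x',y')=\frac{\partial_y u(x,y)}{|\nabla u|^2(x,y)}\,,\qquad \partial_{x'}v(x',y')=-\frac{\partial_x u(x,y)}{|\nabla u|^2(x,y)}\,,
\]
which on $\Gamma_\rho$ (where $|\nabla u|=1$) reduce to $\partial_{y'}v(\eta(x),0)=\partial_y u(x,f(x))$ and $\partial_{x'}v(\eta(x),0)=-\partial_x u(x,f(x))$; then $f'=-\partial_x u/\partial_y u$ from \eqref{e:identities-along-Gamma} gives \eqref{eq:etauu} immediately. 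Your algebraic route is also fine, but note that from $|\nabla v|=1$ and $\partial_{x'}v=f'/\eta'$ you only get $(\partial_{y'}v)^2=1/\eta'^2$, so you still owe a one-line sign argument (e.g.\ $\partial_{y'}v(0,0)>0$ from the inverse function theorem, since $\partial_y u(0,0)=1$) to conclude $\partial_{y'}v=1/\eta'$ rather than $-1/\eta'$. The paper's explicit-formula route sidesteps this.
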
	
	\begin{proof}  We first notice that $v$ is harmonic in $T(\Omega_\rho)$ since it is the second component of a conformal map. Moreover, using \eqref{e:inverse-identity-y} and taking the derivatives with respect to $x$ and $y$, we get
		\begin{align*}
		\partial_{x'} v\big(U(x,y),u(x,y)\big)\partial_xU(x,y)+	\partial_{y'} v\big(U(x,y),u(x,y)\big)\partial_xu(x,y)=0,\\
	\partial_{x'} v\big(U(x,y),u(x,y)\big)\partial_yU(x,y)+	\partial_{y'} v\big(U(x,y),u(x,y)\big)\partial_yu(x,y)=1.
	\end{align*}
Using the identities \eqref{e:identities-U} for the partial derivatives of $U$, we obtain the system 
	 \begin{align*}
	 \partial_{x'} v(x',y')\,\partial_yu(x,y)+	\partial_{y'} v(x',y')\,\partial_xu(x,y)=0,\\
	 -\partial_{x'} v(x',y')\,\partial_xu(x,y)+	\partial_{y'} v(x',y')\,\partial_yu(x,y)=1,
	 \end{align*}
which leads to
	 \begin{equation}\label{e:boh}
	 \partial_{y'}v(x',y')=\frac{\partial_yu(x,y)}{|\nabla u|^2(x,y)}
	 \qquad \text{and}\qquad
	 \partial_{x'}v(x',y')=-\frac{\partial_xu(x,y)}{|\nabla u|^2(x,y)}\,.
	 \end{equation}
In particular, we have that, for every $(x,y)\in \Omega_\rho\cup\Gamma_\rho$,
	 \begin{equation}\label{e:gradv}
	 |\nabla u|(x,y)\, |\nabla v|(x',y')=1\,,
	 \end{equation}
	 where $(x',y')=(U(x,y),u(x,y))$. This concludes the proof of \eqref{e:equations-v}.

We next prove \eqref{eq:etauu}. By \eqref{e:identities-along-Gamma}, we have that
	 $$ f'(x)=- \frac{\partial_{x}u(x,f(x))}{\partial_{y}u(x,f(x))}\quad\text{for every}\quad x\in\mathcal I_\rho.$$
	On the other hand, by \eqref{e:boh} and the fact that $x'=\eta(x)$, we get 
	\begin{equation}\label{e:boh}
	\partial_{y'}v(\eta(x),0)=\partial_yu(x,f(x))
	\qquad \text{and}\qquad
	\partial_{x'}v(\eta(x),0)=-\partial_xu(x,f(x))\,.
	\end{equation}
	Thus, we obtain:
$$f'(x)= \frac{\partial_{x'}v(\eta(x),0)}{\partial_{y'}v(\eta(x),0)}\quad\text{for every}\quad x\in\mathcal I_\rho.$$
Using \cref{rem:eta}, we get that for every $x\in\mathcal I_\rho$
$$\eta'(x)=\sqrt{1+f'(t)^2}=\frac{1}{\partial_{y'}v(\eta(x),0)}\ ,$$
which concludes the proof.
	 	\end{proof}

\subsubsection{Analyticity of the free boundary. Proof of \cref{t:1}}
We define the complex gradient 
$$Q:=\partial_{z'} v=\partial_{x'}v-i\partial_{y'}v,$$
where $z'=x'+iy'$. Then $Q$ satisfies
\begin{equation*}
\partial_{\bar z'}Q=0\quad\text{in}\quad T(\Omega_\rho)\ ,\quad
|Q|= 1\quad\text{on}\quad T(\Gamma_\rho)\,.
\end{equation*}
Consider the function
$$P:T(\Omega_\rho)\cup T(\Gamma_\rho)\to\R\ ,\quad P=-i\frac{Q+i}{Q-i}\ .$$
Then, $P$ is holomorphic in $T(\Omega_\rho)$ and since $P$ can be also written as
$$P=\frac{2\,\text{Re}\,Q}{|Q-i|^2}-i\frac{|Q|^2-1}{|Q-i|^2}\,,$$
we get that $P$ satisfies
\begin{equation*}
\partial_{\bar z'}P=0\quad\text{in}\quad T(\Omega_\rho)\ ,\quad
\text{Im}\, P=0\quad\text{on}\quad T(\Gamma_\rho).
\end{equation*}
In particular, this implies that $P$ can be extended to an holomorphic function across $T(\Gamma_\rho)$. As a consequence, also $Q$ is holomorphic across $T(\Gamma_\rho)$ and so, the imaginary part $\partial_{y'}v(x',0)$ is analytic function of $x'$. Using the fact that the function $\eta:(-\rho,\rho)\to\R$ solves the equation 
$$\displaystyle\eta'(x)=\frac{1}{\partial_{y'}v(\eta(x),0)}\ ,\qquad 
\eta(0)=0\,,$$
we obtain that $\eta$ is analytic. As a consequence, also $f$ is analytic. This concludes the proof of \cref{t:1}.\qed

\subsection{Construction of analytic solutions}\label{s:proof-t2}
Let $f:\mathcal I_\rho\to\R$ be an analytic function such that 
$$f(0)=f'(0)=0\quad\text{and}\quad |f'|\le 1\quad\text{on}\quad\mathcal I_\rho\,.$$
In particular, also $f'(x)$ is analytic on $\mathcal I_\rho$. Define the function
$$\eta:\mathcal I_\rho\to\R\ ,\qquad\eta(x):=\int_{0}^x\sqrt{1+(f'(t))^2}\,dt\ .$$
Then, $\eta$ is analytic on $\mathcal I_\rho$, 
$$\eta(0)=0\ ,\quad \eta'(0)=1\quad\text{and}\quad \eta'\ge 1\quad\text{on}\quad \mathcal I_\rho\,.$$
Let $\eta^{-1}:\eta(\mathcal I_\rho)\to\mathcal I_\rho$ be the inverse of $\eta$. We next define the function
$$\beta:\eta(\mathcal I_\rho)\to\R\ ,$$
by the identity
$$\eta'(x)=\frac1{\beta(\eta(x))}.$$
We notice that $\beta$ is anlaytic and that $0<|\beta|\le 1$ on the interval $\eta(\mathcal I_\rho)$. Similarly, let 
$$\alpha:\eta(\mathcal I_\rho)\to\R\ ,$$
be the analytic function defined trough the identity
$$f'(x)=\frac{\alpha(\eta(x))}{\beta(\eta(x))}\quad\text{for every}\quad x\in\mathcal I_\rho.$$
We notice that since by construction 
$$\eta'(x)=\sqrt{1+(f'(x))^2}\,,$$
we get that 
$$\frac1{\beta(\eta(x))}=\sqrt{1+\frac{\alpha(\eta(x))^2}{\beta(\eta(x))^2}}\,$$
which leads to 
$$\alpha(x')^2+\beta(x')^2=1\quad\text{for every}\quad x'\in\eta(\mathcal I_\rho)\,.$$
We define the function 
$$Q:\eta(\mathcal I_\rho)\to\mathbb C\ ,\qquad Q(x'):=\alpha(x')-i\beta(x')\,.$$
Then, $Q$ is analytic (in the real variable $x'$) on $\mathcal I_\rho$ and 
$$Q(0,0)=-i	\qquad\text{and}\qquad|Q|=1\quad\text{on}\quad \eta(\mathcal I_\rho).$$
As a consequence, in a small ball $B_\delta$, the function $Q$ can be extended to a holomorphic function in the complex variable $z'=x'+iy'$. We will use the notation 
$$Q(z')=Q(x',y')=\alpha(x',y')-i\beta(x',y').$$
Notice that since $Q$ is holomorphic on $B_\delta$, we have 
$$\partial_{\bar z'}Q=0,$$
or, equivalently, for every $(x',y')\in B_\delta$,
$$\partial_{x'}\alpha(x',y')=-\partial_{y'}\beta(x',y')\quad\text{and}\quad \partial_{y'}\alpha(x',y')=\partial_{x'}\beta(x',y')\,.$$
In particular, this means that the $1$-forms 
$$\alpha(x',y')\,dx'+\beta(x',y')\,dy'\qquad\text{and}\qquad -\beta(x',y')\,dx'+\alpha(x',y')\,dy'$$
are both closed in $B_\delta$. Then, we can find functions
$$V:B_\delta\to\R\qquad\text{and}\qquad v:B_\delta\to\R$$
such that 
$$V(0,0)=0\ ,\quad \partial_{x'}V(x',y')=-\beta(x',y')\ ,\quad \partial_{y'}V(x',y')=\alpha(x',y')\,;$$
$$v(0,0)=0\ ,\quad \partial_{x'}v(x',y')=\alpha(x',y')\ ,\quad \partial_{y'}v(x',y')=\beta(x',y')\,.$$
First, we notice that $v$ solves the following problem in $B_\delta$ :
$$\Delta v=0\quad\text{in}\quad B_\delta\ ,\qquad |\nabla v|=1\quad\text{on}\quad B_\delta\cap\{y'=0\}.$$
Second, the map 
$$S:B_\delta\to \R^2\ ,\qquad S(x',y'):=\big(V(x,y),v(x,y)\big)\ ,$$
is a conformal change of coordinates and is such that 
$$S(0,0)=(0,0)\quad\text{and}\quad\nabla S(0,0)=\begin{pmatrix}-1&0\\0&1\end{pmatrix}\ ,$$
so it is invertible in a neighborhood $B_\eps$ of $(0,0)$. Let $T$ be its inverse and let $U$ and $u$ be the components of $T$
$$T:S(B_\eps)\to \R^2\ ,\qquad T(x,y):=\big(U(x,y),u(x,y)\big)\ .$$
We define the sets $\Omega\subset\R^2$ and $\Gamma\subset\R^2$ as 
$$\Omega:=S\big(B_\eps\cap\{y'>0\}\big)\qquad\text{and}\qquad \Gamma:=S\big(B_\eps\cap\{y'=0\}\big).$$
Since $S$ is a homeomorphism, we have that 
$$\partial \Omega=\Gamma\quad\text{in the open set}\quad S\big(B_\eps).$$
Moreover, by the definion of $u$, we have that 
$$u>0\quad\text{in}\quad\Omega\qquad\text{and}\qquad u=0\quad\text{on}\quad\Gamma\,.$$
Notice, that $T$ is holomorphic (since it's the inverse of a holomorphic function) and so, $u$ is harmonic in the whole open set $S(B_\eps)$. Moreover, reasoning as in \cref{lem:con1ph}, we get
$$|\nabla u|=1\quad\text{on}\quad\Gamma\,.$$
In conclusion, if we take the positive part $u_{\text{\tiny+}}$ of the function $u$ in $S(B_\eps)$, then we have $u_{\text{\tiny+}}$ is a solution of the one-phase free boundary problem 
$$\Delta u_{\text{\tiny+}}=0\quad\text{in}\quad\Omega=\{u_{\text{\tiny+}}>0\}\cap S(B_\eps)\ ,\qquad |\nabla u_{\text{\tiny+}}|=1\quad\text{on}\quad\Gamma=\partial\{u_{\text{\tiny+}}>0\}\cap S(B_\eps)\,.$$
It only remains to show that $\Gamma$ is the graph of $f$. By the definiton of $\Gamma$ we know that (up to taking $\eps>0$ small enough) $\Gamma$ is the graph of a certain function $g$ with $g(0)=g'(0)=0$. Now, by \cref{rem:graph} and \cref{lem:con1ph}, we get that 
$$\mu'(x)=\frac{1}{\partial_{y'}v(\mu(x),0))}=\frac{1}{\beta(\mu(x),0))}\quad\text{where}\quad \mu(x):=\int_0^x\sqrt{1+(g'(t))^2}\,dt\,.$$
Now, by the Cauchy-Lipschitz theory for the ODE 
$$\mu'(x)=\frac{1}{\beta(\mu(x),0))}\ ,\quad \mu(0)=0\ ,$$
we get that $\mu\equiv \eta$. Finally, by the formula 
$$f'(x)=\frac{\alpha(\mu(x))}{\beta(\mu(x))}=g(x)\,,$$
we obtain that $f\equiv g$, which concludes the proof of \cref{t:2}.


\bibliographystyle{plain}
\bibliography{FreeBoundary_bib.bib}


\appendix

\medskip
\small
\begin{flushright}
\noindent 
\verb"lorenzo.ferreri@sns.it"\\
Classe di Scienze, Scuola Normale Superiore\\ 
Piazza dei Cavalieri 7, 56126 Pisa (Italy)
\end{flushright}

\begin{flushright}
\noindent 
\verb"lspolaor@ucsd.edu"\\
Department of Mathematics, UC San Diego,\\
 AP\&M, La Jolla, California, 92093, USA
\end{flushright}

\begin{flushright}
\noindent 
\verb"bozhidar.velichkov@unipi.it"\\
Dipartimento di Matematica, Università di Pisa\\ 
Largo Bruno Pontecorvo 5, 56127 Pisa (Italy)
\end{flushright}

\end{document}